\newtheorem{theorem}{Theorem}%[section]
\newtheorem{proposition}{Proposition}%[section]
\title{$3D$  polyominoes inscribed in a rectangular prism}
\author{Alain Goupil$^{1}$ and Hugo Cloutier } 
\address{ D\'{e}partement de math\'{e}matiques et d'informatique, Universit\'{e} du Qu\'{e}bec \`{a} Trois-Rivi\`{e}res, 
3351 boul des Forges, c.p. 500,
Trois-Rivi\`{e}res (QC) Canada} 
\email{alain.goupil@uqtr.ca}
\email{hugo854@yahoo.ca}
\begin{document}
\keywords{polycube, inscribed polyomino, enumeration, rectangular prism, generating function, minimal volume.}

\begin{abstract}
We introduce a family of $3D$ combinatorial objects that we define as  minimal $3D$ polyominoes inscribed in a rectanglar prism. These objects  are connected sets of unitary cubic cells  inscribed in a given rectangular prism and of minimal volume under this condition. They  extend the concept of $2D$ polyominoes inscribed in a rectangle defined in a previous work. Using their geometric structure and elementary combinatorial arguments, we construct generating functions of minimal $3D$ polyominoes in the form of rational functions. We also obtain a number of exact formulas and recurrences for sub-families of these polyominoes. 
\end{abstract}

\maketitle
%=============================================================
% section  Introduction
%=============================================================

\section{Introduction} 
Since the rise of modern combinatorics in the early 1960's, most combinatorial objects are visualized and investigated with pencil and paper  and therefore, are $2$-dimensional. A number of  extensions from  $2D$ combinatorial objects to $3D$ objects were introduced:  Ferrers diagrams were extended to plane partitions, permutations were extended to maps on a surface and to braids, $2D$ fractals were extended to $3D$ fractals and a short list of exact results for the enumeration of $3D$ objects have been produced so far (see \cite{BG},\cite{Ma}). Behind these efforts lay a fundamental question: Is $3D$ combinatorics similar to $2D$ combinatorics in the sense that it is a natural extension of notions and concepts already known in $2D$ or does it introduce new material and concepts unknown in $2D$ combinatorics ? This question was part of our motivation to begin a study of $3D$ polyominoes.  

A $2D$-polyomino is a $4$-connected set of unit square cells in the discrete plane. That is, the cells are connected by their edges.  A polyomino is inscribed in a $b\times k$ rectangle when it is contained in this rectangle and  touches each of its four sides.  Inscribed $2D$ polyominoes  with minimal area were introduced in a previous work (see \cite{GCN}) where an elementary geometric characterization  was given that permitted their enumeration and the construction of their generating functions. The geometry of an inscribed  minimal $2D$ polyomino can be described in simple terms as a {\it hook-stair-hook} structure where a hook is  formed with two mutually perpendicular rows of cells  starting on an edge of the rectangle and meeting at their corner end  (see fig. \ref{fig:3:a} red cells and \cite{GCN} for more details). A $2D$  {\it stair} is a path of connected cells beginning on one  corner of a rectangle, say north-west, and moving along the corresponding diagonal in the east - south direction (see fig. \ref{fig:3:a}, black cells and their circumscribed rectangle) to end in the opposite corner of the rectangle.

\begin{figure}
\centering
\subfigure[Generic  $2D$ minimal polyomino ] 
{ \label{fig:1:a}
    \includegraphics[width=3.0 cm]{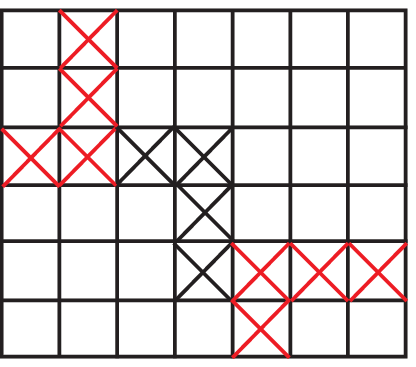}}
\hspace{.3 cm}
\subfigure[$3D$ diagonal polyomino] 
{ \label{fig:1:b}
    \includegraphics[width=6 cm]{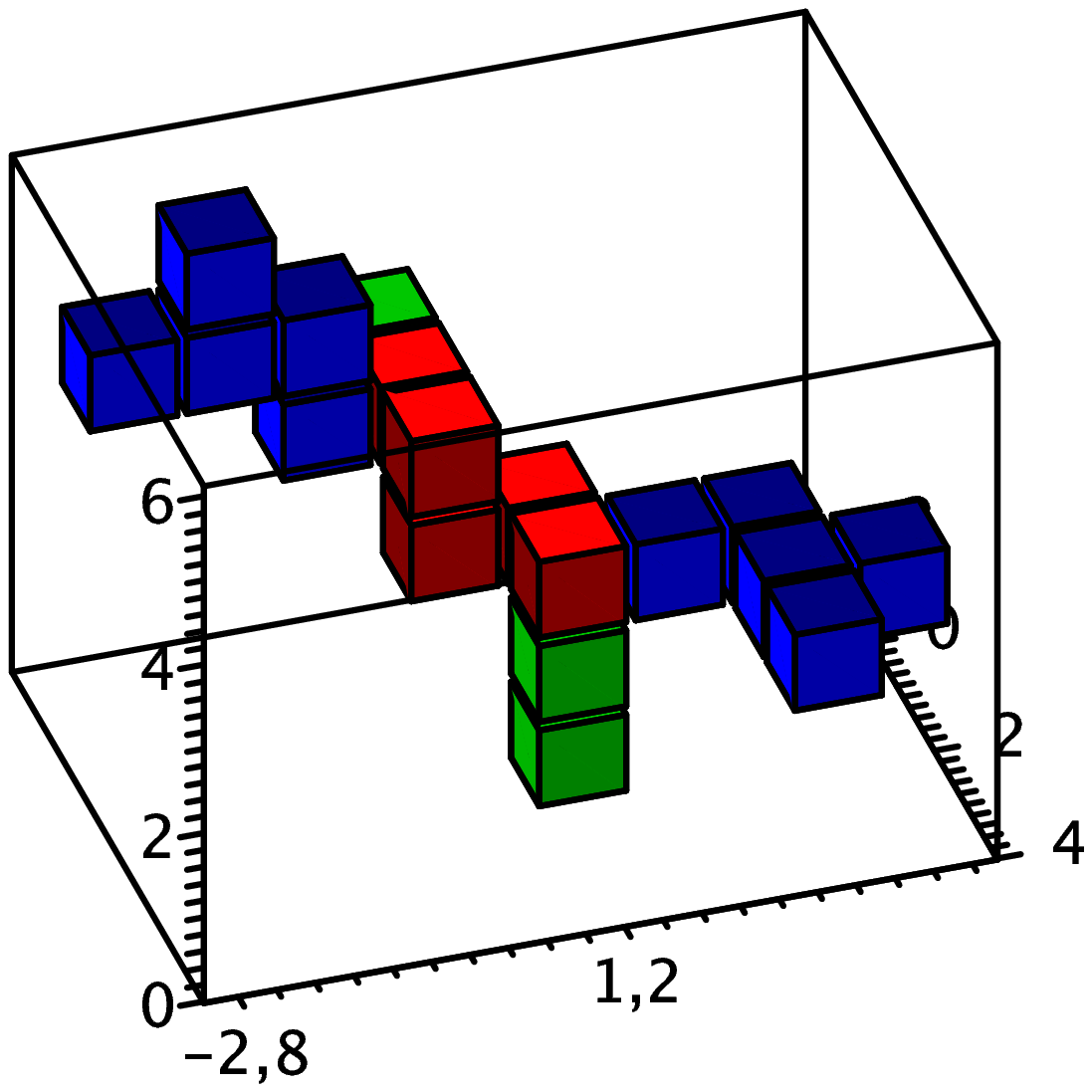}}
    \hspace{1cm}
\caption{$2D$ and $3D$ inscribed minimal polyominoes}
\label{fig1} % caption for the whole figure
\end{figure}

In this paper we introduce $3D$ polyominoes inscribed in a rectangular $b\times k\times h$ prism and we define them as collections of unit $6$-connected  cubic cells contained in the prism and touching each of its six faces. We give a geometric description of a complete collection of  families of  inscribed $3D$ polyominoes with minimal volume. This allows us to present generating functions, recurrences and exact formulas for families of minimal $3D$ inscribed polyominoes. 

$3D$ polyominoes, sometimes called polycubes, are known in the litterature and in recreational mathematics  in the context of packing problems (see \cite{Bo}) and their enumeration according to their volume is given up to volume  $16$ in \cite{Go1} as the result of a computer program. However combinatorial enumeration of inscribed $3D$ polyominoes does not seem to have been considered so far.

We will introduce three disjoint families of minimal $3D$ inscribed polyominoes and show that their union forms the complete set of  $3D$ inscribed polyominoes with minimal volume. These three families will be called respectively {\it $3D$ diagonal poliominoes}, $2D\times 2D$ polyominoes and {\it skew cross}  polyominoes.

We will use the orthogonal projection of inscribed $3D$ polyominoes on the upper face of the prism in view of the fact that an inscribed $3D$  polyomino is of minimal volume if and only if its orthogonal  projection on each rectangular face of the circumscribed prism is a $2D$ polyomino of minimal area. This is easily proved by contradiction for if a $3D$ inscribed polyomino is not minimal, then one of its projections is not $2D$ minimal. Similarly, if one projection is not minimal, then the $3D$ polyomino cannot be minimal. 

The reader will observe that pictures illustrating $3D$ polyominoes frequently appear in our  proofs. This  visual support was used in our investigation and it helped us develop $3D$ visualisation. Their absence would increase the difficulty of understanding our arguments. 

\paragraph{\bf Notations}We will use capital letters for sets and  generating functions and their corresponding lower case letters will be used for set cardinalities. For example $P_{3D,min}(b,k,h)$ will denote the set of $3D$ polyminoes inscribed in a $b\times k\times h$ rectangular prism with minimal volume, $p_{3D,min}(b,k,h)$ will be their number and $P_{3D,min}(x,y,z)=\sum_{b,k,h}p_{3D,min}(b,k,h)x^by^kz^h$  will be their generating function. We will use the convention that the  edge of length $b$ of the cube  is along the $x$ axis and similarly the lengths $k,h$ are along the $y$ and $z$ axis respectively. 

The {\it degree} of a $3D$ cell $c$ in a polyomino, denoted $deg(c)$, is the number of cells having a face in contact  with $c$ and the degree of a $2D$ cell $c$ is the number of cells with an edge contact  with $c$. All polyominoes considered in this paper are $2D$ or $3D$, always  inscribed in a rectangle or a rectangular prism and of minimal area or volume. Therefore we will often omit to specify these caractersitics of polyominoes. We will use trinomial coefficients in their standard notation $\binom{a+b+c}{a,b,c}$.
We refer the reader to [\cite{GCN}] for results and definitions on $2D$ polyominoes. 

The paper is organized as follow. In section \ref{sec1}, we  introduce diagonal $3D$ polyominoes and the subfamilies needed for their geometric description.  We give generating functions, recurrences and exact formulas for a number of these subfamilies.  In section \ref{sec2}, we define two families of non diagonal polyominoes: $2d\times 2d$ polyominoes and skew cross polyominoes with other subfamilies necessary to their description.  We give their generating functions and some exact formulas. 
In section  \ref{sec4}, we prove the main result of the paper which states that these three families of polyominoes form a complete set of $3D$ minimal inscribed polyominoes.  This result comes with  a rational form for the generating function $P_{3D,min}(x,y,z)$ of minimal $3D$ inscribed polyominoes. 

%=============================================================
% section  Diagonal polyominoes
%=============================================================

\section{Diagonal polyominoes}\label{sec1}
 In similarity with $2D$ stairs, we define a {\it $3D$ stair} as an inscribed  polyomino of minimal volume  forming a path starting in a given corner of the prism, say the north-west-back corner, and moving with unit steps in the south, east or forward direction until it reaches the opposite $3D$ diagonal corner as in figure \ref{fig:2:d}.   In what follows, we will use $3D$ stairs as components of polyominoes. 

Recall that a $2D$ corner-polyomino is a $2D$ minimal polyomino inscribed in a rectangle with a cell in a given corner of the rectangle. The number $ P_{c}(b,k)$ of $2D$ corner-polyominoes inscribed in a $b\times k$ rectangle satisfies the following recurrence and exact formula:

\begin{align}
\label{eq1}
 P_{c}(b,k)&= 1+ P_{c}(b,k-1)+ P_{c}(b-1,k)
\\
\label{eq2}
&= 2\binom{b+k-2}{b-1}-1 
\end{align}
with initial conditions $ P_{c}(b,1)= P_{c}(1,k)=1$.
Its generating function has the rational form
\begin{equation}\label{eq3}
 P_{c}(x,y)=\sum_{b,k\geq 1} P_{c}(b,k)x^by^k=\frac{2xy}{(1-x-y)}-\frac{xy}{(1-x)(1-y)}
\end{equation}
Recall also (see \cite{GCN}) that the total number of polyominoes of minimal area inscribed in a rectangle $p_{2D,min}(b,k)$ of size $b\times k$ is given by the formula
\begin{align*}
p_{2D,min}(b,k)=8\binom{b+k-2}{b-1}+2(b+k)-3bk-8
\end{align*}

We will first define and investiguate  $3D$ {\it corner-polyominoes}. A 
$3D$ {\it corner-polyomino} is  a minimal polyomino  inscribed in a prism with one cell in a given corner of the prism, say the north-west-back corner. Let $P_c(b,k,h)$ be the set of corner-polyominoes inscribed in a $b\times k\times h$ prism. 

\begin{theorem}\label{th1} For all positive  integers $b,k,h$,
the number $p_c(b,k,h)$ of $3D$ polyominoes inscribed in a prism of size $b\times k\times h$ with minimal volume and one cell in a given corner of the prism satisfies the following recurrence :

\begin{equation*}
p_c(b,k,h)=\begin{cases}
%0 & \text{if b=0 or k=0 or h=0}\\
2\binom{b+k+h-3}{b-1,k-1,h-1}-1 & \text{if b=1 or k=1 or h=1}\\
1+2\binom{b+k-2}{b-1}+2\binom{b+h-2}{b-1}+2\binom{k+h-2}{k-1}-6\\
+ \;p_c(b-1,k,h)+p_c(b,k-1,h)+p_c(b,k,h-1)
& \text{otherwise}
\end{cases}
\end{equation*}
\end{theorem}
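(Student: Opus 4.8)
The plan is to first pin down the geometry of minimal $3D$ corner-polyominoes and then prove the recurrence by a deletion argument organized according to the degree of the corner cell. I would begin by showing, using the projection criterion recalled in the introduction together with a short minimality estimate, that a $3D$ corner-polyomino inscribed in a $b\times k\times h$ prism with minimal volume is exactly a monotone tree rooted at the chosen corner: its volume equals $b+k+h-2$, it is acyclic, and every edge points in one of the three directions heading into the prism, so that all three coordinates vary monotonically along the unique path from the corner to any cell. Indeed each of the three orthogonal projections must be a $2D$ minimal polyomino, and a connected set touching all six faces needs at least $b+k+h-2$ cells, with equality forcing the monotone-tree shape (a cycle or a non-monotone step would waste a cell).

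For the base case, when one of $b,k,h$ equals $1$, say $h=1$, the prism is a single slab in which every cell automatically meets the two faces $z=1$, so an inscribed minimal $3D$ corner-polyomino coincides with an inscribed minimal $2D$ corner-polyomino of the $b\times k$ rectangle. Formula \eqref{eq2} then gives $p_c(b,k,1)=P_c(b,k)=2\binom{b+k-2}{b-1}-1$, which equals $2\binom{b+k+h-3}{b-1,k-1,h-1}-1$ when $h=1$ since the trinomial collapses to a binomial. The cases $k=1$ and $b=1$ are symmetric, settling the first branch of the recurrence.

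For $b,k,h\ge 2$ I would classify the monotone trees by the degree $d\in\{1,2,3\}$ of the corner cell. If $d=1$, every other cell is reached through the unique occupied neighbour, so the corner is the only cell on the near face in that direction; deleting it leaves a connected monotone tree which is a minimal corner-polyomino of the prism shrunk by one unit in that direction. This deletion is a bijection, and the three neighbour directions contribute precisely $p_c(b-1,k,h)+p_c(b,k-1,h)+p_c(b,k,h-1)$. If $d=3$, a counting argument forces the unique \emph{tripod}: after placing the three neighbours, the remaining $b+k+h-6$ cells are exactly the number required to push each arm straight to its extremal face, leaving no room to branch, so this case contributes $1$.

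The crux, and what I expect to be the main obstacle, is the case $d=2$. Here the corner has two occupied neighbours and one absent direction, say $+z$, yet the polyomino must still reach the far $z$-face through structure grafted onto the part spanning the $x$- and $y$-directions. The naive orthogonal projection onto the $xy$-plane is neither injective nor surjective onto $2D$ corner-polyominoes, so a more delicate bijection, or an auxiliary recurrence matched against \eqref{eq1}, is needed to show that the degree-$2$ trees with present directions $+x,+y$ number exactly $P_c(b,k)-1=2\binom{b+k-2}{b-1}-2$, independently of $h$. Granting this for each of the three choices of absent direction, the degree-$2$ trees contribute $\big(2\binom{b+k-2}{b-1}-2\big)+\big(2\binom{b+h-2}{b-1}-2\big)+\big(2\binom{k+h-2}{k-1}-2\big)$. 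Adding the degree-$3$ contribution $1$ recovers the constant part $1+2\binom{b+k-2}{b-1}+2\binom{b+h-2}{b-1}+2\binom{k+h-2}{k-1}-6$, and together with the degree-$1$ recursive terms this reproduces the stated right-hand side.
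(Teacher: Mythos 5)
Your base case, your degree-$1$ deletion bijection, and your degree-$3$ tripod count all match the paper's proof, which is organized around exactly the same trichotomy on the degree of the corner cell; your monotone-tree lemma is also true and is implicitly what the paper uses. The genuine gap is the degree-$2$ case, which you correctly identify as the crux and then assume (``Granting this\dots''). The problem is not just that it is unproven: the statement you propose to prove is \emph{false}. The degree-$2$ polyominoes whose present directions at the corner are $+x,+y$ do \emph{not} number $2\binom{b+k-2}{b-1}-2$, and their number is not independent of $h$. Take $(b,k,h)=(3,3,2)$. Minimality forces the polyomino to be a tree in which each of the $b-1$ $x$-cuts, $k-1$ $y$-cuts and $h-1$ $z$-cuts is crossed by exactly one edge; since the two edges at the corner already cross the first $x$-cut and the first $y$-cut, the subtree hanging off $(2,1,1)$ must lie entirely in the slab $y=1$ and the subtree hanging off $(1,2,1)$ entirely in the slab $x=1$. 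Enumerating these pairs of planar monotone trees gives exactly $6$ polyominoes in your class $\{+x,+y\}$ (three with $\{c,(2,1,1),(3,1,1)\}$ as the $xz$-part, three with the roles of $x$ and $y$ exchanged), whereas your claim predicts $2\binom{4}{2}-2=10$. In general your class has $P_c(b-1,h)+P_c(k-1,h)$ elements; this coincides with $2\binom{b+k-2}{b-1}-2$ in the very symmetric cases you likely tested (e.g.\ $b=k=h$, or sides equal to $2$), but not in general, so no bijection or auxiliary recurrence can establish it.

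The correct way to attach the three binomial terms to classes --- and this is what the paper does --- is to index degree-$2$ polyominoes not by the pair of directions present at the corner, but by the face of the prism carrying their planar part. A degree-$2$ corner-polyomino decomposes \emph{uniquely} as a $2D$ corner-polyomino inscribed in one of the three faces of the prism through $c$, in which $c$ has degree $1$ (i.e.\ any corner-polyomino of that face except the hook cornered at $c$), together with a straight pilar that starts at $c$ and runs along the prism edge perpendicular to that face up to the opposite face; uniqueness follows from the same cut argument as above. Under this indexing, the term $2\binom{b+k-2}{b-1}-2$ counts the polyominoes whose pilar is vertical, i.e.\ precisely polyominoes that \emph{do} have a $+z$ neighbour at $c$ --- the opposite of your class. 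The three per-face counts sum to the same middle terms as your three per-direction guesses, which is why your final recurrence is nevertheless correct; but as a proof strategy your route is blocked, and it must be replaced by this face-indexed decomposition.
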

\begin{proof}  The first case is the $2D$ case. It provides the initial conditions for the $3D$ case and is obtained from equations (\ref{eq1}) and (\ref{eq2}). In the second case, observe that a corner cell has degree  one, two or three. There is exactly one $3D$ corner-polyomino of degree three inscribed in a $b\times k\times h$ prism and we call this polyomino a {\it tripod}.  This explains the term $1$ in the recurrence. When the corner cell $c$ is of degree two, then $c$ is the corner cell  of a $2D$ corner-polyomino different from a $2D$ hook that is inscribed in a face of the prism and attached to a perpendicular row of cells along an edge of the prism. A  row of cells connecting the polyomino to a face of the prism will often be considered and we will call these components {\it pilars}.  Figure  \ref{fig:2:b} illustrates this situation: the corner cell of degree two is the red cell, the $2D$ corner-polyomino is made of the red and blue cells and the set of green cells forms a pilar. The next four terms in the recurrence are thus deduced from  equation (\ref{eq2})

Now if the corner $c$ has degree one, as in figure \ref{fig:2:c}, then the polyomino starts with a $3D$ stair giving the last three terms of the recurrence and the proof is complete. Observe that the separation according to the degree of the corner cell also gives the following equivalent formulation for the  recurrence: 

\begin{align*}
P_c(b,k,h)&= tripod + ( 2D\mbox{-} corner  - 2D\mbox{-} hook) + deg1
\\
&=1+(P_c(b,k,1)+P_c(b,1,h)+P_c(1,k,h)-3)+\\
\nonumber
& (P_c(b,k,h-1)+P_c(b,k-1,h)+P_c(b-1,k,h)
\end{align*}

\begin{figure}\small
\centering
\subfigure[deg3: tripod ] 
{ \label{fig:2:a}
    \includegraphics[width=3 cm]{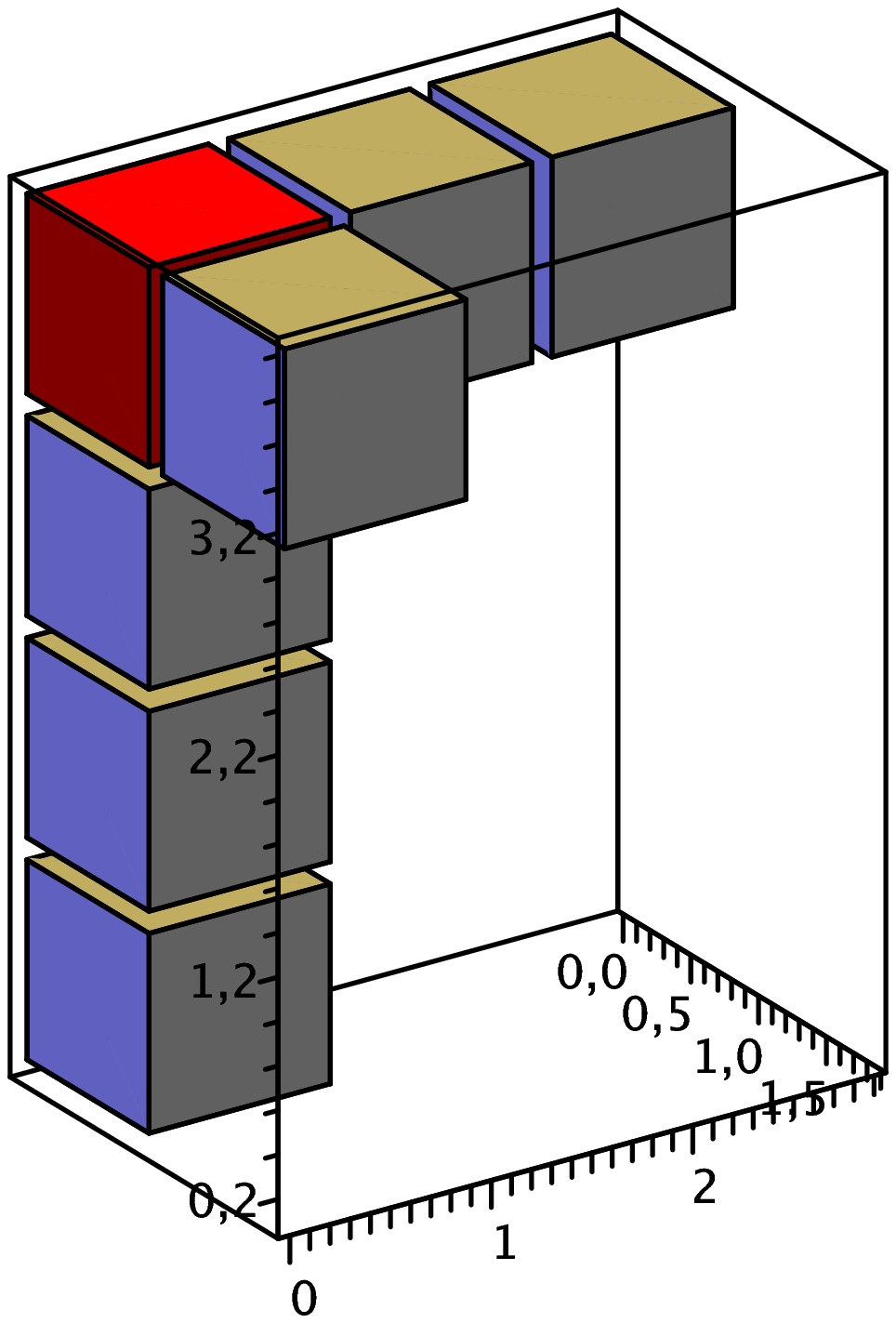}}
\hspace{.1 cm}
\subfigure[Degree two: $3D$ hook] 
{ \label{fig:2:b}
    \includegraphics[width=3 cm]{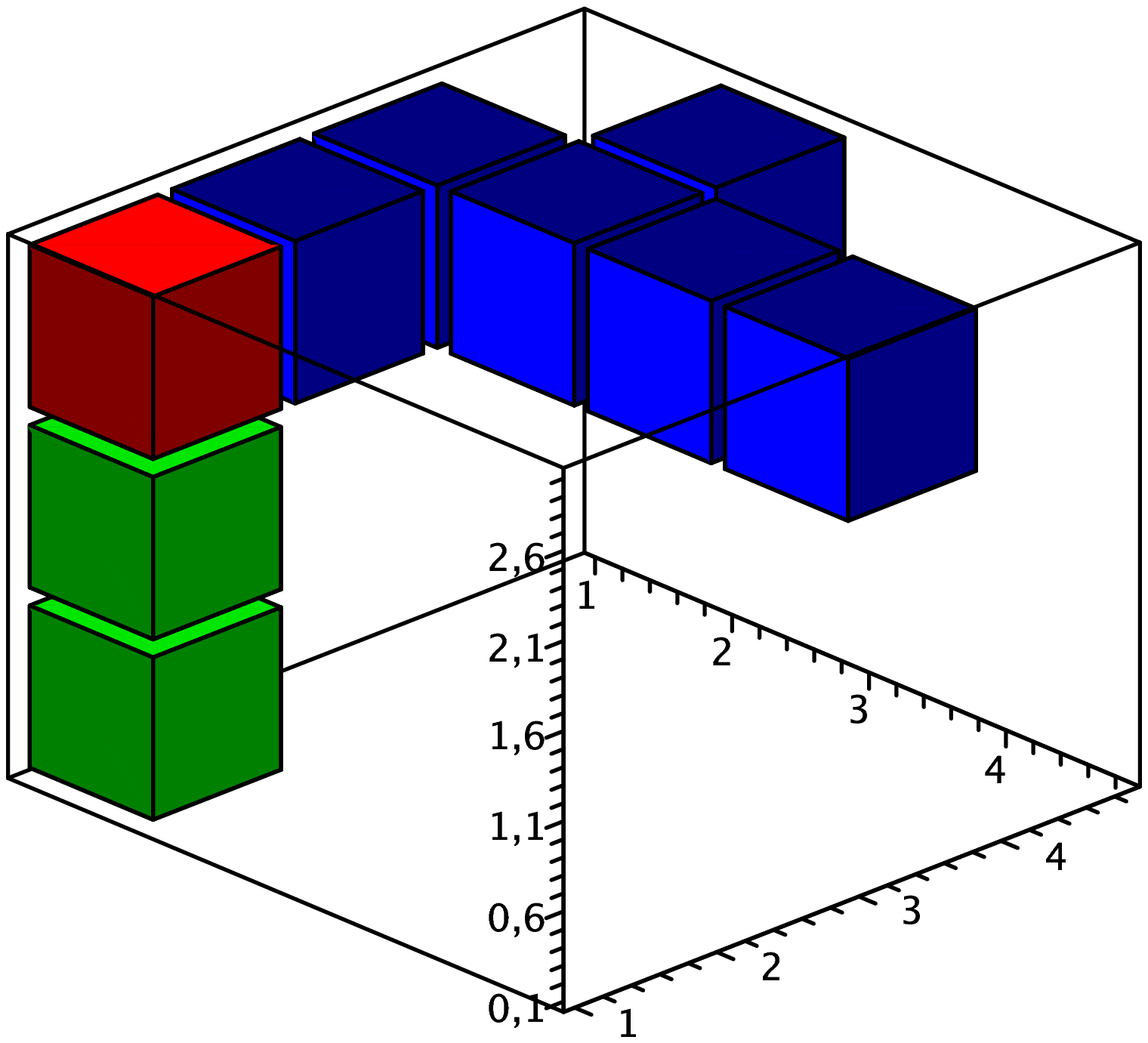}}
\hspace{.2 cm}
\subfigure[Degree one] 
{\label{fig:2:c}
    \includegraphics[width=3 cm]{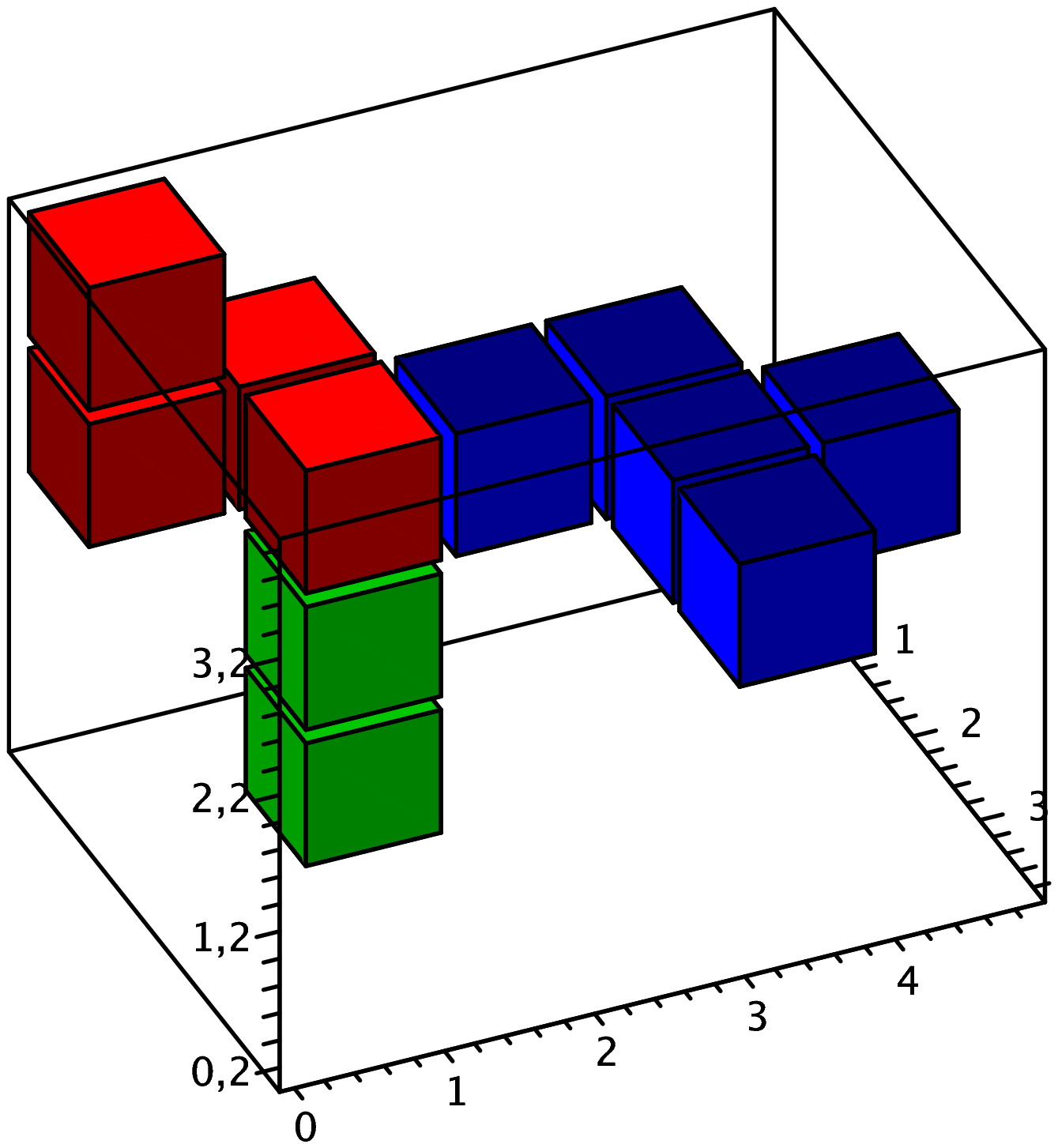}}
    \subfigure[3D stair]   
{\label{fig:2:d}
    \includegraphics[width=2.9 cm]{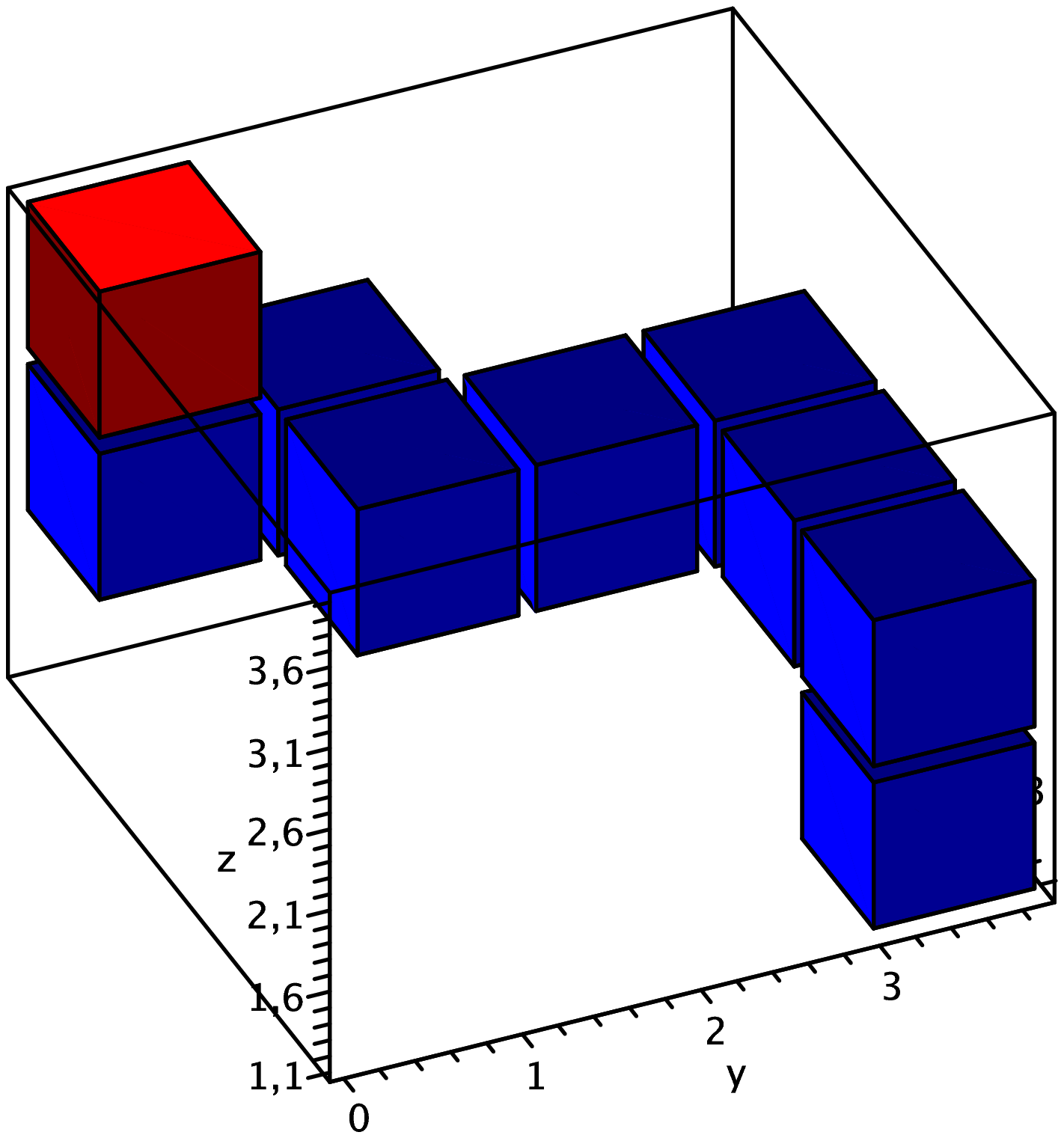}}
\caption{Corner polyominoes}
\label{fig2} % caption for the whole figure
\end{figure}
\end{proof}

\paragraph{\bf Generating function} To establish the generating function for the set of $3D$ corner-polyominoes, we will first give the generating functions $Stair(x,y,z)$, $Tripod(x,y,z)$,  $2dhook(x,y,z)$ and $Deg2(x,y,z)$ which are  $3D$ stairs, tripods,  $2D$ hooks and $3D$ corner-polyominoes of degree two respectively:
\begin{align}
\nonumber
Tripod(x,y,z)&=\sum_{i,j,k\geq 2}x^iy^jz^k=\frac{x^2y^2z^2}{(1-x)(1-y)(1-z)}\\
\nonumber
Stair(x,y,z)&= \sum_{i,j,k\geq 1}\binom{i+j+k-3}{i-1,j-1,k-1}x^iy^jz^k
=xyz\sum_{n\geq 0}(x+y+z)^n\\
\nonumber
&=\frac{xyz}{(1-x-y-z)}\\
\nonumber
2Dhook(x,y,z)&=\frac{x^2y^2z}{(1-x)(1-y)}+\frac{x^2yz^2}{(1-x)(1-z)}+\frac{xy^2z^2}{(1-z)(1-y)}\\
\label{eq7}
Deg2(x,y,z)&= 
\left[ \frac{2yz}{(1-y-z)}-\frac{2yz}{(1-y)(1-z)}  \right]\frac{x^2}{(1-x)} +\\
\nonumber
\left[ \frac{2xz}{(1-x-z)}\right.&\left.-\frac{2xz}{(1-x)(1-z)}  \right]\frac{y^2}{(1-y)} +
\left[ \frac{2xy}{(1-x-y)}-\frac{2xy}{(1-x)(1-y)}  \right]\frac{z^2}{(1-z)}
\end{align}

The proof for the rational form of these generating functions is straightforward once we understand the geometric nature of the corresponding objects: there is one tripod per prism because, by definition, their corner cell is in a given corner of the prism. The number of stairs from one corner to its diagonal opposite corner in a prism of size $b\times k\times h$ is equal to the trinomial coefficient $\binom{b+k+h-3}{b-1,k-1,h-1}$. $2D$-hooks appear on a slice parallel to one of the faces so we have three terms, one for each coordinate plane. The generating function for corner-polyominoes of degree two (equation (\ref{eq7})) is  directly obtained from its definition: a $2D$ corner of degree one perpendicular to a $pilar$.

Now we are ready to use these building  blocs. For instance a $3D$ corner of degree one   always begins as a $3D$ stair of length at least two connected to a $3D$ corner of any degree. The generating function $Deg1(x,y,z)$ of  $3D$ corners of degree one is thus 

\begin{align*} 
Deg1(x,y,z)=\left( Stair(x,y,z)-xyz\right)
\left( 1+ Tripod+Deg2 +2Dhook \right)
\end{align*}

Since we now have the generating functions for corner-polyominoes of degree one, two and three, we deduce the following result.

\begin{proposition}
The generating function $P_c(x,y,z)$ for $3D$ corner-polyominoes is the following :
\begin{align}  \label{eq9}
\nonumber
P_c(x,y,z)&= \sum_{b,k,h\geq 1}p_c(b,k,h)x^by^kz^h\\
&=
Stair(x,y,z)\left[ 1+\frac{Tripod(x,y,z)+Deg2(x,y,z)+2Dhook(x,y,z)}{xyz}
\right]
\end{align}
\end{proposition}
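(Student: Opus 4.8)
The plan is to read the target identity $P_c(x,y,z)=Stair\bigl[\,1+(Tripod+Deg2+2Dhook)/xyz\,\bigr]$ as the generating function of a direct geometric decomposition of every corner-polyomino into an initial stair followed by a terminal \emph{cap}. First I would show that each corner-polyomino $P$ carries a canonical initial stair: starting from the distinguished corner cell $c$, follow the forced monotone (south, east, forward) path as far as it stays single, and let $e$ be the cell where it stops. Deleting from $P$ the cells strictly before $e$ leaves a sub-polyomino whose corner cell is $e$, which I call the cap; the whole object is recovered by gluing the stair from $c$ to $e$ to the cap along the single shared cell $e$. This gluing is a bijection between corner-polyominoes and pairs (stair, cap), which is exactly the product structure visible on the right-hand side.

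The key structural claim is that the cap is always one of precisely four types: the trivial cap (so $P$ is a pure corner-to-corner stair), a tripod, a degree-two corner, or a $2D$ hook. This should follow from maximality of the initial stair together with minimality of the volume: the corner $e$ of the cap cannot have a single forward neighbour, since otherwise the stair would have continued, so $e$ has degree $0$, $2$, or $3$ inside the cap. Degree $3$ forces the unique tripod already identified in the text; degree $2$ gives exactly the degree-two corners, which split into the non-hook family counted by $Deg2$ and the hooks counted by $2Dhook$; and degree $0$ means $P$ stops at $e$, which by inscription forces $e$ to be the opposite corner and $P$ to be a pure stair. I expect this case analysis, supported by the figures already introduced, to be the main obstacle, since it is where the geometry must be made rigorous: why no other cap can arise, and why the glued object is inscribed and still of minimal volume (here the projection criterion from the introduction can be invoked).

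Once the decomposition is in place, the bookkeeping is routine. Gluing a stair inscribed in a $b_1\times k_1\times h_1$ prism to a cap inscribed in a $b_2\times k_2\times h_2$ prism along the shared cell $e$ produces a polyomino inscribed in the $(b_1+b_2-1)\times(k_1+k_2-1)\times(h_1+h_2-1)$ prism, so the monomials multiply while one factor $xyz$ is over-counted and must be removed, giving the factor $1/xyz$. Summing over cap types, the trivial cap contributes $Stair$ (all corner-to-corner stairs, the single cell included), while the tripod, degree-two, and hook caps contribute $Stair\cdot Tripod/xyz$, $Stair\cdot Deg2/xyz$, and $Stair\cdot 2Dhook/xyz$. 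Factoring out $Stair$ and inserting the closed forms already established yields
\[
P_c(x,y,z)=Stair\Bigl[\,1+\frac{Tripod+Deg2+2Dhook}{xyz}\,\Bigr].
\]
As a consistency check I would verify that this grouping matches the degree-of-$c$ classification used earlier: a length-one initial stair turns the tripod, $Deg2$, and hook caps into the degree-three and degree-two corners, stairs of length at least two reproduce the degree-one family $Deg1$, and the degree-zero single cell is the $xyz$ term absorbed into $Stair$.
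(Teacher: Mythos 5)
Your proposal is correct and takes essentially the same route as the paper: the paper's own (very terse) proof rests on exactly this decomposition of a $3D$ corner-polyomino into a $3D$ stair glued to a cap that is empty, a tripod, a degree-two corner, or a $2D$ hook, with the shared cell assigned to the stair --- which is precisely your factor $1/xyz$. Your write-up simply makes explicit the maximal-initial-stair construction and the degree-$\{0,2,3\}$ cap classification that the paper treats as immediate from the degree analysis already done in its Theorem \ref{th1}.
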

\begin{proof}
This is an immediate consequence of the fact that a $3D$ corner-polyomino is  the connection of a $3D$ stair with a $3D$ corner-polyomino of arbitrary degree. In equation (\ref{eq9}), we decide that  the corner cell common to a $3D$ corner-polyomino and a $3D$ stair belongs to the stair so we divide the generating function of the former by $xyz$.
\end{proof}

\begin{theorem}\small
For all positive  integers $b,k,h$, we have 
\begin{align}\label{eq10}
\nonumber
p_c(b,k,h)&=4\binom{b+h-2}{h-1}\binom{b+k+h-3}{b+h-2}+
\sum_{i=0}^{h-2}(-1)^i \binom{b+h-4-2i}{h-2-i} \binom{b+k+h-4-i}{b+h-3-2i}\\
&-2\left[ \binom{b+h-2}{b-1}+\binom{b+k-2}{k-1}+\binom{k+h-2}{h-1}\right] + 3-\frac{(1+(-1)^h)}{2}
\end{align}
\end{theorem}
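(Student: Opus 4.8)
The plan is to read off $p_c(b,k,h)=[x^by^kz^h]P_c(x,y,z)$ directly from the closed form in equation (\ref{eq9}), rather than to solve the recurrence of Theorem \ref{th1}. Expanding the bracket in (\ref{eq9}) splits $P_c$ into four rational pieces,
\[
P_c = Stair + \frac{Stair\cdot Tripod}{xyz} + \frac{Stair\cdot Deg2}{xyz} + \frac{Stair\cdot 2Dhook}{xyz},
\]
and I would extract the coefficient of $x^by^kz^h$ from each piece separately and then add the four contributions. The first piece is immediate: since $Stair=\frac{xyz}{1-x-y-z}$, its coefficient is the trinomial $\binom{b+k+h-3}{b-1,k-1,h-1}$, which will combine with the leading product in (\ref{eq10}).

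For the three mixed pieces I would use $\frac{Stair}{xyz}=\frac{1}{1-x-y-z}$ together with the elementary facts $[x^a]\frac{1}{1-x}=1$ and $[x^by^kz^h]\frac{1}{1-x-y-z}=\binom{b+k+h}{b,k,h}$. After stripping off the explicit monomial prefactors (the $x^2y^2z^2$ of the tripod piece and the mixed monomials of the hook and degree-two pieces, which account for the index shifts in (\ref{eq10})), each contribution becomes a partial convolution of $\frac{1}{1-x-y-z}$ against products of geometric series such as $\frac{1}{(1-x)(1-y)(1-z)}$ and against the planar diagonals $\frac{1}{1-y-z}$, $\frac{1}{1-x-z}$, $\frac{1}{1-x-y}$ carried by $Deg2$. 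The key simplification making these tractable is the partial-fraction identity
\[
\frac{1}{(1-x-u)(1-u)}=\frac{1}{x}\left(\frac{1}{1-x-u}-\frac{1}{1-u}\right),\qquad u\in\{y+z,\;x+z,\;x+y\},
\]
which collapses each nested pair of diagonal denominators into a difference of single diagonals whose coefficients are again trinomial or binomial coefficients, reducing the triple convolutions to Vandermonde-type single sums.

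The bulk of the work, and the step I expect to be the main obstacle, is the degree-two piece $\frac{Stair\cdot Deg2}{xyz}$: here the full three-variable diagonal $\frac{1}{1-x-y-z}$ overlaps the planar diagonals $\frac{1}{1-y-z}$ etc., so that collapsing the resulting double sums into closed form is delicate and must be carried out in a fixed order. Extracting coefficients by summing out the $y$ (i.e. $k$) direction first and keeping the $(x,z)$, i.e. $(b,h)$, pair for last is precisely what breaks the manifest $b,k,h$ symmetry of $p_c$ and produces the asymmetric shape of (\ref{eq10}): the leading product $4\binom{b+h-2}{h-1}\binom{b+k+h-3}{b+h-2}$ arises from the Vandermonde collapse of a trinomial summed over the $k$ index, the alternating sum $\sum_{i=0}^{h-2}(-1)^i(\cdots)$ is the trace of iterated applications of the identity above in the retained $z$ variable, and the parity term $\frac{1+(-1)^h}{2}$ records a residual $\frac{1}{(1-z)(1+z)}$ factor left at the end of that elimination.

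Finally I would assemble the four coefficient contributions, absorb the lower-order corrections $-2\big[\binom{b+h-2}{b-1}+\binom{b+k-2}{k-1}+\binom{k+h-2}{h-1}\big]+3$ coming from the pure geometric-series parts and the monomial shifts, and verify the total reduces to (\ref{eq10}). As an independent sanity check I would specialize $h=1$ and confirm that the formula collapses to the $2D$ corner count $2\binom{b+k-2}{b-1}-1$ of equation (\ref{eq2}), which also pins down the parity term; a second check is that the value is invariant under permuting $b,k,h$, despite the asymmetric presentation.
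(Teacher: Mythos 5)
Your overall route is genuinely different from the paper's: you propose to read $p_c(b,k,h)$ off as $[x^by^kz^h]$ of the closed form (\ref{eq9}), whereas the paper proves (\ref{eq10}) by induction on $b+k+h$, substituting the induction hypothesis into the recurrence of Theorem \ref{th1} and recombining binomial coefficients (your $h=1$ sanity check against (\ref{eq2}) is exactly the paper's base case). The explicit steps you do state are correct: the four-piece decomposition of $P_c$, the stair coefficient $\binom{b+k+h-3}{b-1,k-1,h-1}$ (which indeed equals the leading factor $\binom{b+h-2}{h-1}\binom{b+k+h-3}{b+h-2}$ of (\ref{eq10})), and the partial-fraction identity $\frac{1}{(1-x-u)(1-u)}=\frac{1}{x}\bigl(\frac{1}{1-x-u}-\frac{1}{1-u}\bigr)$.

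However, there is a genuine gap: that identity only disposes of the parts of $Deg2$ that carry a planar diagonal $\frac{1}{1-y-z}$, $\frac{1}{1-x-z}$, $\frac{1}{1-x-y}$. The pieces that actually produce the hard part of (\ref{eq10}) --- the $Tripod$ piece, the $2Dhook$ pieces, and the $\frac{2yz}{(1-y)(1-z)}\cdot\frac{x^2}{1-x}$ halves of $Deg2$ --- contain no planar diagonal at all: they are products of $\frac{1}{1-x-y-z}$ with two or three geometric factors $\frac{1}{1-x},\frac{1}{1-y},\frac{1}{1-z}$, so their coefficients are partial sums of trinomial coefficients over two or three indices. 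One hockey-stick summation reduces, e.g., the piece $\frac{x^2y^2z}{(1-x)(1-y)(1-x-y-z)}$ to $\sum_{j}\binom{j+h-1}{j}\binom{b+j+h-2}{b-2}$, in which the summation index sits in the \emph{upper} index of both binomials; such sums are not Vandermonde convolutions and your identity does not apply to them. Yet these are precisely the source of the alternating sum $\sum_{i=0}^{h-2}(-1)^i(\cdots)$ and of the parity term in (\ref{eq10}): your attributions of those terms (``trace of iterated applications of the identity,'' ``a residual $\frac{1}{(1-z)(1+z)}$ factor'') are conjectures, not derivations, and no mechanism in your plan produces them. So the central computation of the proof is missing; what you have established is only the easy stair contribution plus the planar-diagonal parts of $Deg2$. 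The argument could in principle be completed (for instance via negative-upper-index rewritings such as $\binom{j+h-1}{j}=(-1)^j\binom{-h}{j}$, which is plausibly where the signs $(-1)^i$ originate), but as written it is a program rather than a proof --- in contrast with the paper's inductive verification, which is long but entirely mechanical.
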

\begin{proof}
By induction on $b+k+h$. If $h=1$ then the prism is reduced to a rectangle in the $xy$ plane and formula (\ref{eq10}) gives $p_c(b,k,1)=2\binom{b+k-2}{b-1}-1$ which agrees with equation (\ref{eq2}). The same argument is true for $b=1$ and $k=1$. Suppose that formula (\ref{eq3}) is true for a prism of size $b\times k\times h$ with $b,k,h\geq 2$. We have 
\begin{align*}
p_c(b,k,h+1)&=
1+2\binom{b+k-2}{b-1}+2\binom{b+h-1}{b-1}+2\binom{k+h-1}{k-1}-6\\
&+p_c(b-1,k,h+1)+p_c(b,k-1,h+1)+p_c(b,k,h)
\end{align*}
by theorem \ref{th1} and by induction hypothesis we obtain
\begin{align*}
p_c(b,k,h+1)&=1+2\binom{b+k-2}{b-1}+2\binom{b+h-1}{b-1}+2\binom{k+h-1}{k-1}-6\\
&+4\binom{b+h-2}{h}\binom{b+k+h-3}{b+h-2}+
\sum_{i=0}^{h-1}(-1)^i \binom{b+h-4-2i}{h-1-i} \binom{b+k+h-4-i}{b+h-3-2i}\\
&-2\left[ \binom{b+h-2}{b-2}+\binom{b+k-3}{k-1}+\binom{k+h-1}{h}\right] + 3-\frac{(1+(-1)^{h+1})}{2}\\
&+4\binom{b+h-1}{h}\binom{b+k+h-3}{b+h-1}+
\sum_{i=0}^{h-1}(-1)^i \binom{b+h-3-2i}{h-1-i} \binom{b+k+h-4-i}{b+h-2-2i}\\
&-2\left[ \binom{b+h-1}{b-1}+\binom{b+k-3}{k-2}+\binom{k+h-2}{h}\right] + 3-\frac{(1+(-1)^{h+1})}{2}\\
&+4\binom{b+h-2}{h-1}\binom{b+k+h-3}{b+h-2}+
\sum_{i=0}^{h-2}(-1)^i \binom{b+h-4-2i}{h-2-i} \binom{b+k+h-4-i}{b+h-3-2i}\\
&-2\left[ \binom{b+h-2}{b-1}+\binom{b+k-2}{k-1}+\binom{k+h-2}{h-1}\right] + 3-\frac{(1+(-1)^h)}{2}\\
&=4\binom{b+h-1}{h-1}\binom{b+k+h-2}{b+h-1}+
\sum_{i=0}^{h-1}(-1)^i \binom{b+h-3-2i}{h-1-i} \binom{b+k+h-3-i}{b+h-2-2i}\\
&-2\left[ \binom{b+h-1}{b-1}+\binom{b+k-2}{k-1}+\binom{k+h-1}{h}\right] + 3-\frac{(1+(-1)^{h+1})}{2}
\end{align*}
which is what we wanted to prove.
\end{proof}
It is now possible to construct formulas for the set of polyominoes along one given diagonal of the prism. We define {\it diagonal polyominoes} as inscribed polyominoes of minimal volume formed with three pieces: two  hooks on each corner of a diagonal of the prism  connected by a stair in contact with their corner cell  (see figure \ref{fig:1:b}). By a hook we mean either a $3D$ corner-polyomino with corner of degree two or three or a $2D$ hook. From this definition, we deduce the rational form of the generating function of diagonal polyominoes.

\begin{proposition} The generating function $1Diag(x,y,z)$ of diagonal polyominoes along one given diagonal of a prism is the following
\begin{align}\label{eq11}
1Diag(x,y,z)=Stair(x,y,z)\left[ 1+\frac{Tripod(x,y,z)+Deg2(x,y,z)+2Dhook(x,y,z)}{xyz}
\right]^2
\end{align}
\end{proposition}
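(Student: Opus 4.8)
The plan is to establish (\ref{eq11}) by the structural decomposition that already produced the corner-polyomino series (\ref{eq9}), now applied at \emph{both} ends of the diagonal. Put
\[
H(x,y,z)=1+\frac{Tripod(x,y,z)+Deg2(x,y,z)+2Dhook(x,y,z)}{xyz},
\]
so that the corner-polyomino proposition reads $P_c(x,y,z)=Stair(x,y,z)\,H(x,y,z)$. The factor $H$ records the choice, made at the free corner of a stair, of one admissible hook: the trivial hook consisting of the bare corner cell (the summand $1$), a tripod, a degree-two corner, or a $2D$ hook; the division by $xyz$ expresses the convention of (\ref{eq9}) that the corner cell shared by the stair and the hook is charged to the stair. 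A diagonal polyomino is, by definition, a $3D$ stair joining the two cells of a fixed diagonal of the prism and carrying such an admissible hook at each of its two extreme corner cells, so I expect each corner to contribute one factor $H$ and the identity to take the form $1Diag=Stair\,H^2$.

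First I would fix the diagonal, say from the north-west-back corner to the south-east-front corner, and prove that sending a diagonal polyomino to the ordered triple (hook at the back corner, stair, hook at the front corner) is a bijection that respects the $(b,k,h)$-grading up to the shared-cell corrections. The stair is the uniquely determined monotone diagonal path between the two corners, and deleting it leaves two disjoint hook pieces, one attached at each endpoint; conversely any choice of the two hooks together with the connecting stair reassembles into a minimal diagonal polyomino. Minimality is immediate from the projection criterion of the introduction: the stair alone already meets all six faces, and each coordinate projection of the assembled object is a $2D$ hook-stair-hook, hence of minimal area. The generating function of an admissible hook at a single corner is $xyz+Tripod+Deg2+2Dhook=xyz\,H$, the summand $xyz$ being the trivial hook.

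It then remains to turn the gluing into an algebraic product. The stair shares its back endpoint with the back hook and its front endpoint with the front hook, and in each gluing the two bounding boxes overlap in exactly one cell, so the enclosing box of the union has side lengths equal to the sums of the corresponding side lengths minus one per shared cell. Hence each gluing multiplies the series and divides by $xyz$, giving
\[
1Diag(x,y,z)=\frac{(xyz\,H)\cdot Stair\cdot(xyz\,H)}{(xyz)^2}=Stair(x,y,z)\,H(x,y,z)^2,
\]
which is exactly (\ref{eq11}); equivalently $1Diag=P_c^{\,2}/Stair$, reflecting that a diagonal polyomino is two corner-polyominoes sharing their common diagonal stair.

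I expect the genuine difficulty to lie in the bijection rather than in this algebra. Two geometric points need care, both of the kind already settled for (\ref{eq9}) with the help of the figures. First, one must check that the two end-hooks and the stair never interfere beyond the two shared corner cells: since the hooks occupy opposite corners and the stair separates them along the diagonal, the union creates no unintended overlap and no shortcut that would shorten the stair or fuse the two hooks, so the decomposition is unambiguous and the side lengths add as claimed. Second, the degenerate configurations must be reconciled with the closed form --- a prism that is thin in one direction, where several hook types collapse onto one another, and a short diagonal, where the two extreme corner cells lie close together --- and one must confirm that in each such case the coefficient produced by $Stair\,H^2$ still equals the number of admissible assemblies. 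This last verification, which I would carry out by inspecting the small prisms directly, is the delicate part of the argument.
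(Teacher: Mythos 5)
Your proof is correct and follows essentially the same route as the paper: the authors likewise read off $1Diag = Stair\cdot\bigl[1+(Tripod+Deg2+2Dhook)/(xyz)\bigr]^2$ directly from the definition of a diagonal polyomino as a stair with a (possibly absent) hook at each end of the diagonal, with the summand $1$ accounting for an absent hook and the division by $xyz$ for the convention that each shared corner cell is charged to the stair. Your additional verifications (the bijectivity of the decomposition and the grading arithmetic of the glued pieces) merely make explicit what the paper treats as immediate from the definitions.
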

\begin{proof}
This is a direct consequence of the definition of diagonal polyominoes, tripods, stairs, corner-polyominoes and $2D$ hooks. Notice that the number $1$ inside the brackets of equation (\ref{eq11}) stands for the fact that $3D$ hooks could be absent and we divide by $xyz$ the next term because we arbitrarily decide that the cell common to a  hook and a stair belongs to the stair so that we remove it from the hook with this division. 
\end{proof}

In the next step, we want to count the total number of diagonal polyominoes in a prism. There are four $3D$ diagonals in a prism. If a polyomino belongs to exactly two diagonals, then the two diagonals always define a plane perpendicular to two parallel faces of the prism. The orthogonal projection of the polyomino on these faces must be a $2D$ minimal polyomino and therefore this projection has the generic form {\it hook-stair-hook}  of a $2D$ minimal polyomino. The projection of the two $3D$ diagonals on any other face are the two diagonal of these rectangles. Since the only $2D$ polyomino that belongs to two diagonals of a rectangle is a $2D$ cross, i.e. two perpendicular rows of cells, the projection of the polyomino on the other faces is always a $2D$ cross. This has consequences on the  form of any $3D$ polyomino along two diagonals which must be made of a {\it full pilar}, i.e. a pilar connecting  two opposite faces, connected to a perpendicular $2D$ generic polyomino inscribed in a full $2D$ slice of the prism  (see the blue part in figure \ref{fig:3:b}. Moreover the full pilar must meet the orthogonal $2D$ polyomino on its stair part. 

Now if a diagonal polyomino belongs to three diagonals, then its projection on each face of the prism is a $2D$ cross. The only polyomino whose projection  on all faces is a $2D$ cross must be a $3D$ cross which also belongs to four diagonals (see figure \ref{fig:3:a}).

\begin{figure}
\centering
\subfigure[  $3D$ cross ] 
{ \label{fig:3:a}
    \includegraphics[width=4.4 cm]{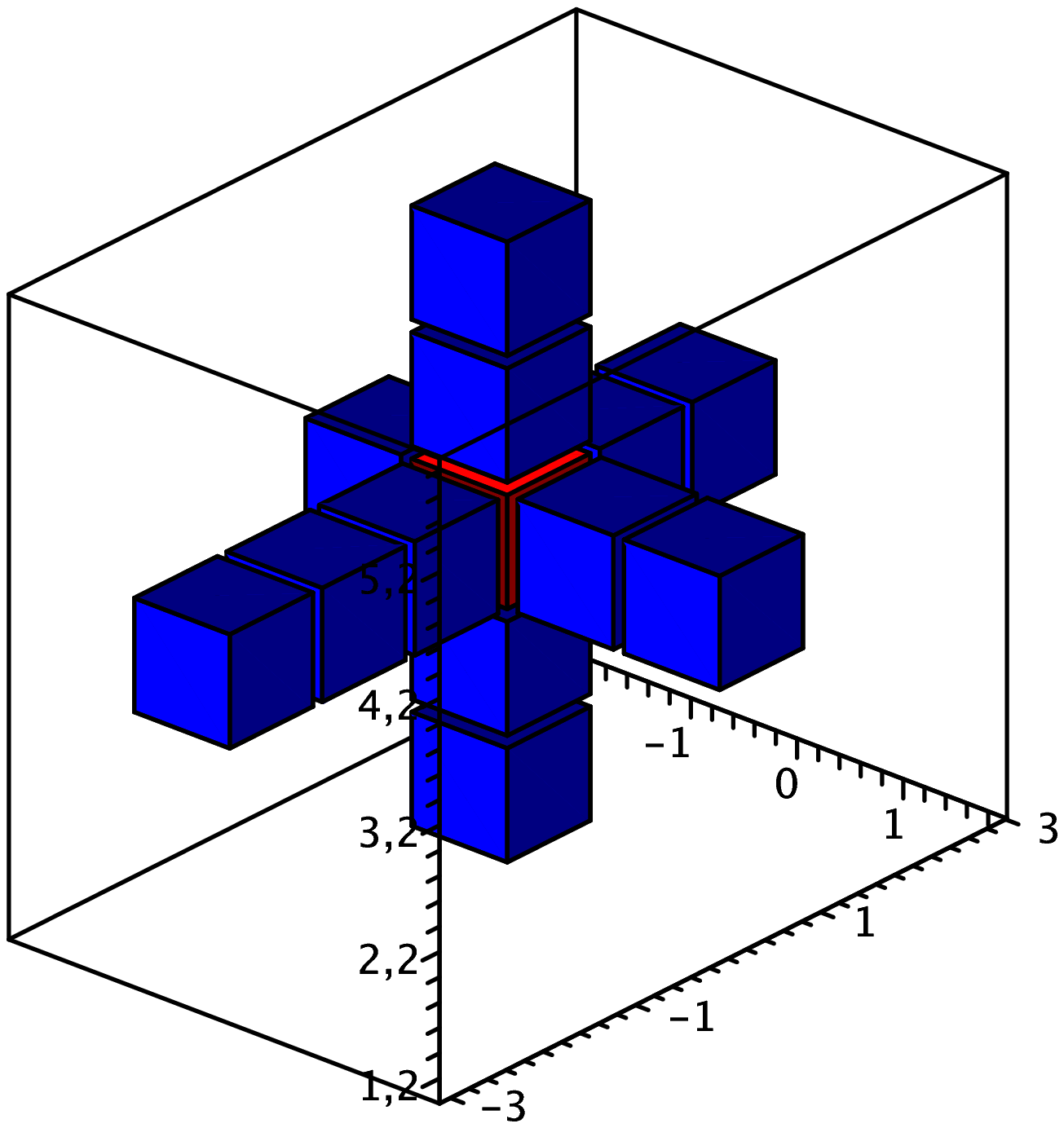}}
\hspace{.3 cm}
\subfigure[A polyomino on two diagonals] 
{ \label{fig:3:b}
    \includegraphics[width=4.6 cm]{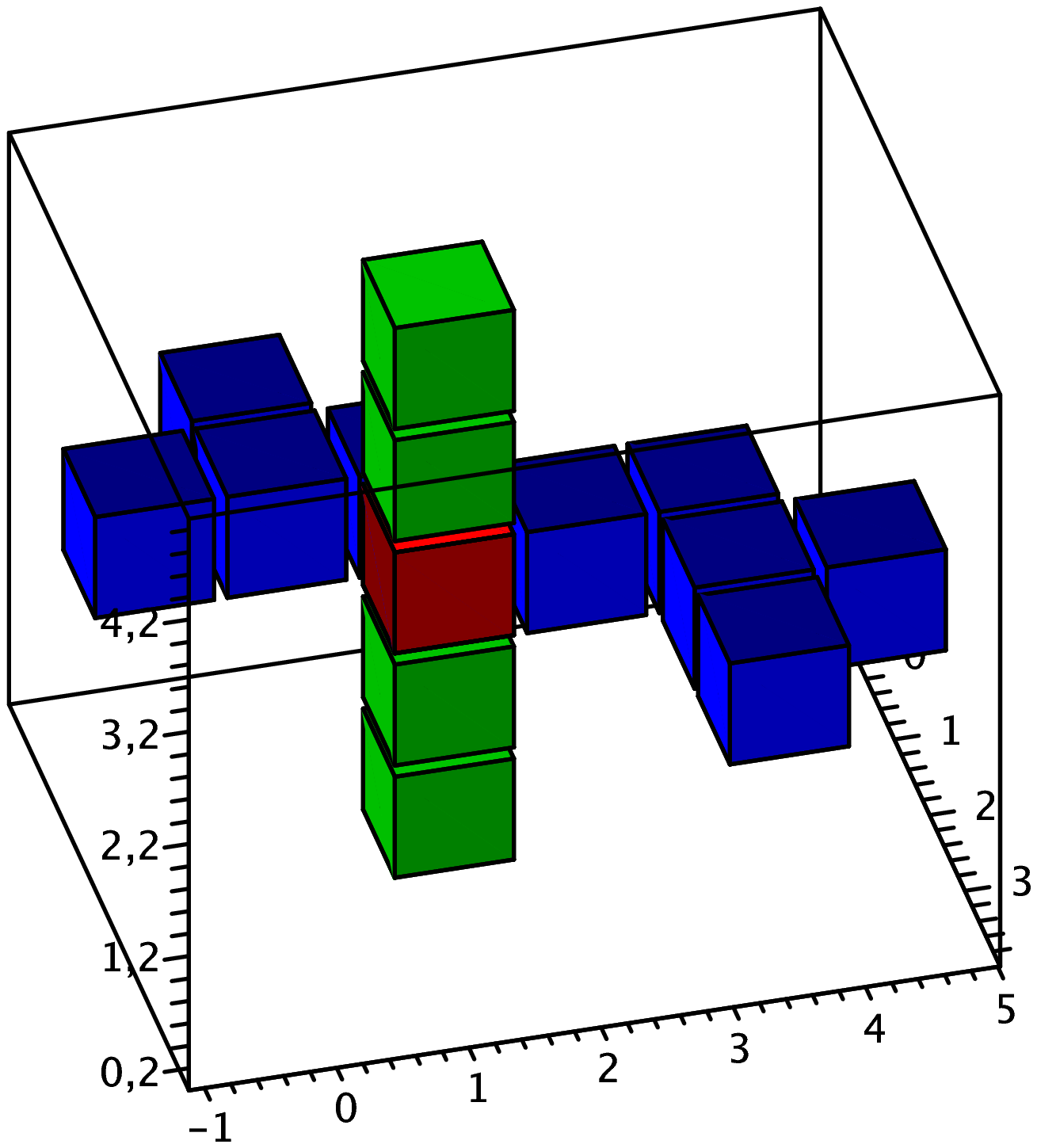}}
    \hspace{1cm}
\caption{Diagonal polyominoes on more than one diagonal}
\label{fig3} % caption for the whole figure
\end{figure}

\paragraph{\bf Polyominoes along two diagonals}
The generic form of polyominoes on two diagonals can be described as two $2D$ corner-polyominoes sharing their corner cell which belong to a full pilar perpendicular to the corner-polyominoes. Since we already know the generating function for $2D$ corner-polyominoes, it is easy to deduce  the generating function for diagonal polyominoes belonging to two and three diagonals. 

\begin{proposition}The number $2diag_z(b,k,h)$ of $3D$ diagonal polyominoes belonging to the two diagonals perpendicular to the $xy$ face of a prism such that the projection of these two diagonals has a vertex in the upper left corner of the face of size $b\times k$ has the following generating function
\begin{align*}
\nonumber
2diag_z(x,y,z)&=\sum_{b,k,h\geq 1}2diag_z(b,k,h)x^by^kz^h\\
&=\frac{1}{xy}\left(\frac{2xy}{(1-x-y)}-\frac{xy}{(1-x)(1-y)}\right)^2
\times \frac{z}{(1-z)^2}
\end{align*}
\end{proposition}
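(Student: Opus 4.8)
The plan is to take the geometric characterization of two-diagonal polyominoes established just above---a full pilar in the $z$ direction meeting, on its stair part, a generic $2D$ minimal polyomino that lies in a single horizontal slice of the prism---and to convert each of its ingredients into a generating function and then multiply. Throughout I keep the convention that $x,y,z$ record the three dimensions $b,k,h$ of the circumscribed prism rather than the number of cells, so that a cell shared by two pieces only forces a correction in the monomial degrees and not in any volume count.

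First I would fix the central cell $c=(x_0,y_0,z_0)$ at which the vertical pilar crosses the horizontal $2D$ polyomino. Because the projection of the configuration on the $xy$ face is a generic $2D$ minimal hook-stair-hook polyomino whose diagonal runs from the prescribed upper-left corner to the opposite corner, the part of this $2D$ polyomino lying weakly above and to the left of $c$ is a $2D$ corner-polyomino having $c$ as its lower-right corner, and the part lying weakly below and to the right of $c$ is a second $2D$ corner-polyomino having $c$ as its upper-left corner. These two corner-polyominoes overlap in exactly the cell $c$, so their widths add with one common column and their heights add with one common row. Using the known corner-polyomino series $P_c(x,y)$ of equation (\ref{eq3}) for each half and dividing by $xy$ to delete the doubly counted row and column of $c$ yields the horizontal factor $\frac{1}{xy}\,P_c(x,y)^2=\frac{1}{xy}\left(\frac{2xy}{1-x-y}-\frac{xy}{(1-x)(1-y)}\right)^2$.

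Next I would account for the pilar. Since it is full, i.e. it joins the top and bottom $xy$-faces, it contains exactly $h$ cells in a prism of height $h$ and so contributes $z^h$; independently, the horizontal slice carrying the $2D$ polyomino may sit at any one of the $h$ levels $z_0\in\{1,\dots,h\}$. Collecting these gives the vertical factor $\sum_{h\ge 1}h\,z^h=\frac{z}{(1-z)^2}$. As the choice of the $2D$ polyomino (equivalently, of the two corner-polyominoes at $c$) and the choice of the pilar height together with the slice level are independent, the three factors multiply and produce precisely the stated expression for $2diag_z(x,y,z)$.

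The step that really needs care---and that I would treat as the crux---is the bijective justification of this decomposition: that every $3D$ polyomino lying on exactly these two diagonals perpendicular to the $xy$ face arises from one and only one triple consisting of a left corner-polyomino at $c$, a right corner-polyomino at $c$, and a pilar height together with a slice level. This rests on the cross-projection analysis preceding the proposition (the projection is a $2D$ cross on the $xz$ and $yz$ faces, forcing a single full pilar, and a generic minimal polyomino on the $xy$ face whose stair must contain $c$), together with the symmetry remark that $2D$ corner-polyominoes opening toward the upper-left and toward the lower-right are enumerated by the same series $P_c(x,y)$. Once this structural uniqueness is in hand, the generating-function bookkeeping above is routine.
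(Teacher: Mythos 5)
Your proposal is correct and follows essentially the same route as the paper: the paper's (very terse) proof likewise invokes the structure $2D\;corner \times (2D\;corner - corner\;cell) \times pilar$ from the preceding structural discussion, giving the factor $\frac{1}{xy}P_c(x,y)^2$ from equation (\ref{eq3}) together with the vertical factor $\sum_{h\ge 1}h\,z^h = \frac{z}{(1-z)^2}$ for the full pilar and the choice of slice level. Your write-up simply makes explicit the gluing-at-$c$ bookkeeping and the appeal to the cross-projection analysis that the paper leaves implicit.
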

\begin{proof}
This is immediate from equation (\ref{eq3}) and the fact that these polyominoes have the geometric structure $2D \; corner \times (2D \; corner - corner \; cell)\times pilar$. \\
\end{proof}

\paragraph{\bf $3D$ crosses}Next we need the generating function $3Dcross(x,y,z)$ of $3D$ crosses which are the $3D$ minimal polyominoes made only of pilars, at least three, meeting on one common cell $c$  (see figure \ref{fig:3:a}). Observe that for a prism of size $b\times k\times h$ with $b,k,h\geq 2$, there are $bkh$ cross polyominoes inscribed in that prism and only one if  any two of these three parameters equals one.
We will only consider crosses in a box of size at least $2\times 2\times 2$.  We thus have:
\begin{align*}
\nonumber
3Dcross(x,y,z)&= \sum_{b,k,h\geq 2}bkhx^{b}y^{k}z^h={\frac {{x}^{2} \left( 2-x \right) {y}^{2} \left( 2-y \right) {z}^{2}
 \left( 2-z \right) }{ \left( 1-x \right) ^{2} \left( 1-y \right) ^{2}
 \left( 1-z \right) ^{2}}}
\end{align*}

%We thus have
%\begin{align*}
%\nonumber
%3Dcross(x,y,z)&=
%xyz+\sum_{n\geq 2}(x^nyz+xy^nz+xyz^n)
%+ \sum_{b,k,h\geq 2}bkhx^{b}y^{k}z^h\\
%&=\frac{xyz}{(1-x)^2(1-y)^2(1-z)^2}-
%\left(\frac{xy^2z}{(1-y)^2}+\frac{x^2yz}{(1-x)^2}+\frac{xyz^2}{(1-z)^2}
%\right)
%\end{align*}
\begin{proposition}
The generating function $Diag(x,y,z)$ of the total number of diagonal polyominoes is the following 
\begin{align}\label{eq15}
\nonumber
Diag(x,y,z)=\sum_{b,k,h\geq 2}diag&(b,k,h)x^by^kz^h \\
=4\times 1Diag(x,y,z)&
-2\times\left( 2diag_z(x,y,z)+2diag_y(x,y,z)+2diag_x(x,y,z)
\right)+3\times 3Dcross(x,y,z)
\end{align}
\end{proposition}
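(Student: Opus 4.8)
The plan is to prove the identity by inclusion--exclusion on the number of space diagonals to which a diagonal polyomino belongs, using the structural facts established just above the statement. I would first classify each diagonal polyomino by the set of the four space diagonals it lies on. The preceding discussion shows this number can only be $1$, $2$ or $4$: a polyomino lying on three diagonals projects to a $2D$ cross on every face and is therefore a $3D$ cross, which lies on all four diagonals. Writing $D_1,D_2,D_4$ for the generating functions of diagonal polyominoes lying on exactly $1$, $2$, $4$ diagonals, the quantity to be computed is $Diag=D_1+D_2+D_4$, while $3Dcross=D_4$.

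Next I would evaluate $4\cdot 1Diag$ by a double count. For each of the four diagonals $\delta$ let $N_\delta$ be the generating function of diagonal polyominoes containing $\delta$; the reflection symmetries of the prism fixing its three edge-lengths map the diagonals onto one another, so $N_\delta=1Diag$ for every $\delta$ and $\sum_\delta N_\delta=4\cdot 1Diag$. On the other hand this sum records every polyomino once for each diagonal it lies on, whence
\begin{equation*}
4\cdot 1Diag = D_1 + 2D_2 + 4D_4 .
\end{equation*}

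I would then treat the six pairs of diagonals the same way. Each pair spans a diagonal plane, and the six planes split into three orientations---perpendicular to the $xy$, $xz$ and $yz$ face pairs---with exactly two pairs per orientation. For a pair of $z$-orientation the analysis recalled above (a full pilar along $z$ carrying a generic $2D$ polyomino in a transverse slice) identifies its generating function with $2diag_z$; the reflection exchanging the two $z$-oriented pairs shows the second one contributes the same series, so together they give $2\cdot 2diag_z$. Summing over the three orientations, $\sum_{\pi} N_\pi = 2\,(2diag_x+2diag_y+2diag_z)$, where $N_\pi$ counts the polyominoes lying on both diagonals of the pair $\pi$. Double counting by polyomino, a member of $D_2$ lies on exactly one pair and a $3D$ cross on all $\binom{4}{2}=6$ pairs, so
\begin{equation*}
2\,(2diag_x+2diag_y+2diag_z) = D_2 + 6D_4 .
\end{equation*}

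Finally I would combine the three relations. Substituting and using $3Dcross=D_4$,
\begin{equation*}
4\cdot 1Diag - 2\,(2diag_x+2diag_y+2diag_z) + 3\cdot 3Dcross = (D_1+2D_2+4D_4)-(D_2+6D_4)+3D_4 = D_1+D_2+D_4,
\end{equation*}
which is exactly $Diag$. The delicate points, and the ones I would check most carefully, are the two multiplicity counts: the exclusion of polyominoes lying on exactly three diagonals (inherited from the preceding paragraphs) and the factor $2$ relating the single-pair series $2diag_z$ to the two pairs of that orientation, together with the six-fold incidence of the $3D$ cross. I would also verify the identity coefficient-wise on the degenerate slabs where one of $b,k,h$ equals $1$, to confirm compatibility with the ranges $b,k,h\ge 2$ implicit in $Diag$ and $3Dcross$.
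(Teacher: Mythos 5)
Your proposal is correct and follows essentially the same route as the paper: an inclusion--exclusion over the four space diagonals and the six pairs of diagonals, using the fact (established just before the statement) that three diagonals force a $3D$ cross on all four, together with the observation that each orientation contributes two pairs counted by the same series $2diag_x$, $2diag_y$, $2diag_z$. Your version simply makes the paper's terse counting explicit via the identities $4\cdot 1Diag = D_1+2D_2+4D_4$ and $\sum_{\pi} N_\pi = D_2+6D_4$, and your closing caveat about degenerate prisms with a side of length one matches the remark the paper itself makes about the validity range of equation (\ref{eq15}).
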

\begin{proof}
In order to count all $3D$ diagonal polyominoes, we use some inclusion-exclusion principle. Here are the steps:
1- Count polyominoes along one diagonal and multiply by four. 2- The polyominoes that belong to two diagonals or more were counted twice or more so for each pair of $3D$ diagonals, remove the polyominoes belonging to those two diagonals. 3- The polyominoes belonging to three diagonals, and thus to four, were counted four times in the first step, removed six times in the second step and so must be added three times to be counted once. 

Notice that this inclusion-exclusion argument is not valid for degenerate prisms that  have one side of length one and for their corresponding terms in the  generating function (\ref{eq15}).   
\end{proof}
In table \ref{tab1} below, we present the numbers $diag(n,n,n)$ of $3D$ diagonal polyominoes as well as cardinality of other sets of polyominoes that are inscribed in a cubic prism of size $n\times n\times n$  that are described in this paper.

%%%%%%%%%%%%%%%%%%%%%%%%%%%%%%%%%
\begin{table}[htdp]
\begin{center}
\begin{tabular}{|c|c|c|c|c|c|c|c|c|c|c|c|c|c|c|}
\hline
$n$&1&2&3&4&5&6&7&8\\
\hline
$diag(n,n,n)$&1&32& 2271& 79936& 2103269& 49998072&1163531779&27263453288\\
\hline
$P_{2D\times2D}(n,n,n)$&0&0&66&2256&34092&352992&2994750&22756896\\
\hline
$SC_a(n,n,n)$&0&0&48&3456&85008&1321344&16174416&172476672\\
\hline
$SC_b(n,n,n)$&0&0&16&1408&33776&505472&5998512&62474496\\
\hline
$P_{3D,min}(n,n,n)$&1&32&2401&87056&2256145&52177880&1188699457&27521161352\\
\hline
\end{tabular}
\end{center}
\caption{Minimal polyominoes inscribed in a $n\times n\times n$ prism} 
\label{tab1}
\end{table}
%%%%%%%%%%%%%%%%%%%%%%%%%%%%%%%%

%===================================================% section  Non Diagonal polyominoes
%===================================================
\section{Non diagonal polyominoes}
\label{sec2}
Does there exists minimal $3D$ polyominoes that are not diagonals ? The answer is yes and figure \ref{fig4}  shows a sample of these objects. For instance the polyomino in figure \ref{fig:4:a} is not diagonal because it is impossible to identify a corner-polyomino in a subprism having three faces in common with the circumscribed prism. This polyomino can be seen as the juxtaposition of two perpendicular $2D$ polyominoes each with contact cell that is not a corner cell.  This is our definition for the family of non diagonal minimal polyominoes that we call $2D\times 2D$ polyominoes.  

%===================================================% subsection  $2D\times 2D$ polyominoes
%=================================================
 
\subsection{$2D\times 2D$ polyominoes} In what follows, we establish the generating function for $2D\times 2D$ polyominoes. For that purpose, we split these polyominoes in three parts, each part corresponding to one color in figure \ref{fig:4:a}. The central part, made of green cells with red corners, will be called a {\it skew hook}. 
It consists of three mutually orthogonal segments of cells. The two end segments touch a face of the prism and so are  pilars with at least one cell. They touch the middle segment on its end  cells. These two end cells are the contact cells of the two other parts (one in blue and one in yellow in figure \ref{fig:4:a}). If we discard the two pilars, each end cell of the middle segment can be seen as the corner cell of a $2D$ corner-polyominoe.  The two $2D$ corner-polyominoes with their associated pilars  are perpendicular and each one goes from one face to its opposite face. Notice that the smallest prism that contains a $2D\times 2D$ polyominoe has size $2\times 3\times 3$ and in that case, the polyominoes are made of two perpendicular full pilars that are the discrete version of euclidian skew lines.

We begin with the generating function of skew hooks. This is quite elementary when we consider that each pilar contains at least one green cell and the central segment contains two red end cells but not necessarily green cells. In order to fix ideas, we agree that the yellow $2D$ polyomino is in the $yz$ plane with $z$ length at least two if we count the red corner cell. The blue $2D$ polyomino is in the $xy$ plane. If we decide that we do not count the contribution in $x$ and $z$ of the central segment and the contribution in $y$  of the red corner cells. we have the following generating function  $SH(x,y,z)$ for skew hooks :
\begin{equation*}
SH(x,y,z)=\frac{x}{(1-x)}\times  \frac{1}{(1-y)}\times \frac{z}{(1-z)}
\end{equation*}

The yellow $2D$ corner  polyominoes  in the $yz$ plane of  $z$ height at least $2$ are obtained from equation (\ref{eq3}):
\begin{equation*}
2D_{c,z\geq 2}(y,z)=yz\left(\frac{2}{(1-y-z)}- \frac{1}{(1-y)(1-z)}- \frac{1}{(1-y)}\right).
\end{equation*}

Then the blue $2D$ corner  polyominoes  in the $xy$ plane 
of  $x$ length at least $2$ :
\begin{equation*}
2D_{c,x\geq 2}(x,y)=xy\left(\frac{2}{(1-x-y)}- \frac{1}{(1-x)(1-y)}- \frac{1}{(1-y)}\right).
\end{equation*}

In order to assemble these three components, observe that if we fix the vertical pilar and the yellow $2D$ corner, then the horizontal green pilar may take two directions that determines the direction of the blue $2D$ polyomino which is equivalent to multiply by  two the number of  blue $2D$ corner-polyominoes and remove the $2D$ crosses which would be counted twice otherwise. We do the same for the yellow $2D$ corner-polyominoes. Finally, observe that the yellow polyomino could be on the left rather than on the right of the central part which multiplies by two again the number of polyominoes and we obtain  the following generating function for $2D\times 2D$ polyominoes with orthogonal planes $xy$ and $yz$.

\begin{equation}\label{eq18}
P_{xy\times yz}(x,y,z)=2\left(2\cdot 2D_{c,x\geq 2}(x,y)-  \frac{x^2y}{(1-x)(1-y)}\right)\cdot SH\cdot 
\left(2\cdot 2D_{c,z\geq 2}(y,z)-  \frac{yz^2}{(1-y)(1-z)}\right).
\end{equation}

\begin{figure}
\centering
\subfigure[ case 1 ] 
{ \label{fig:4:a}
    \includegraphics[width=4.4 cm]{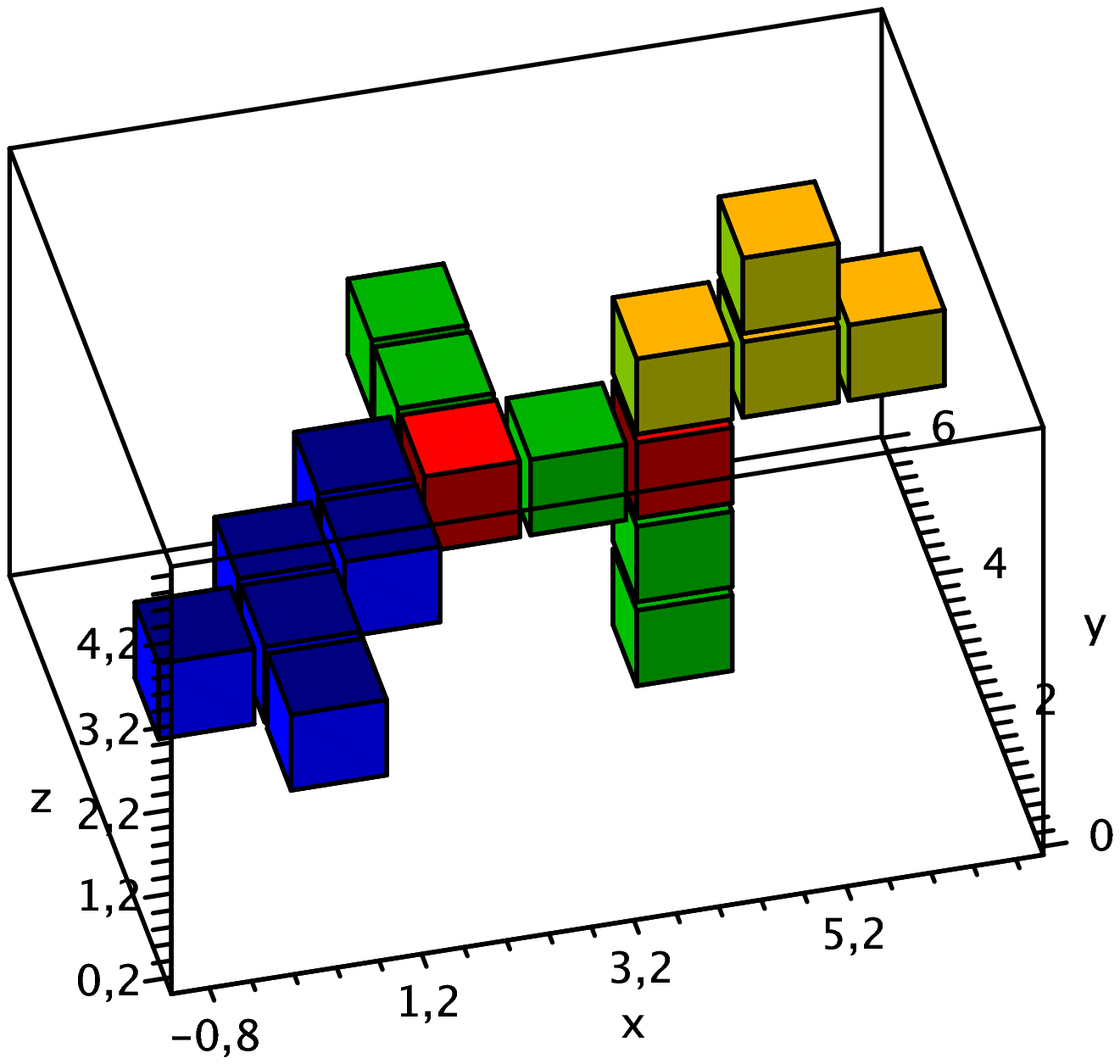}}
\hspace{.3 cm}
\subfigure[case 2] 
{ \label{fig:4:b}
    \includegraphics[width=4.3 cm]{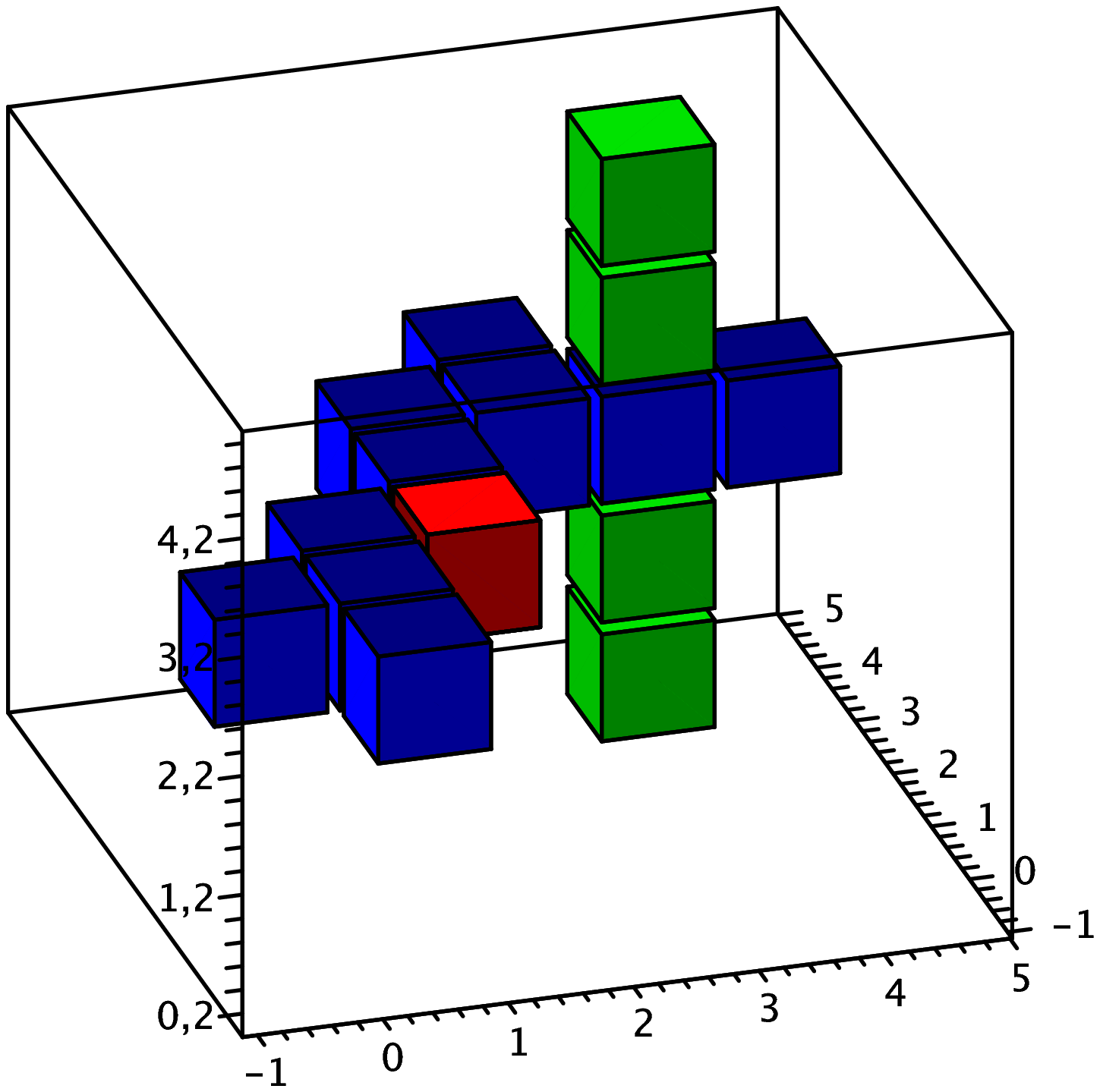}}
    \hspace{.3 cm}
    \subfigure[case 3] 
{ \label{fig:4:c}
    \includegraphics[width=4.4 cm]{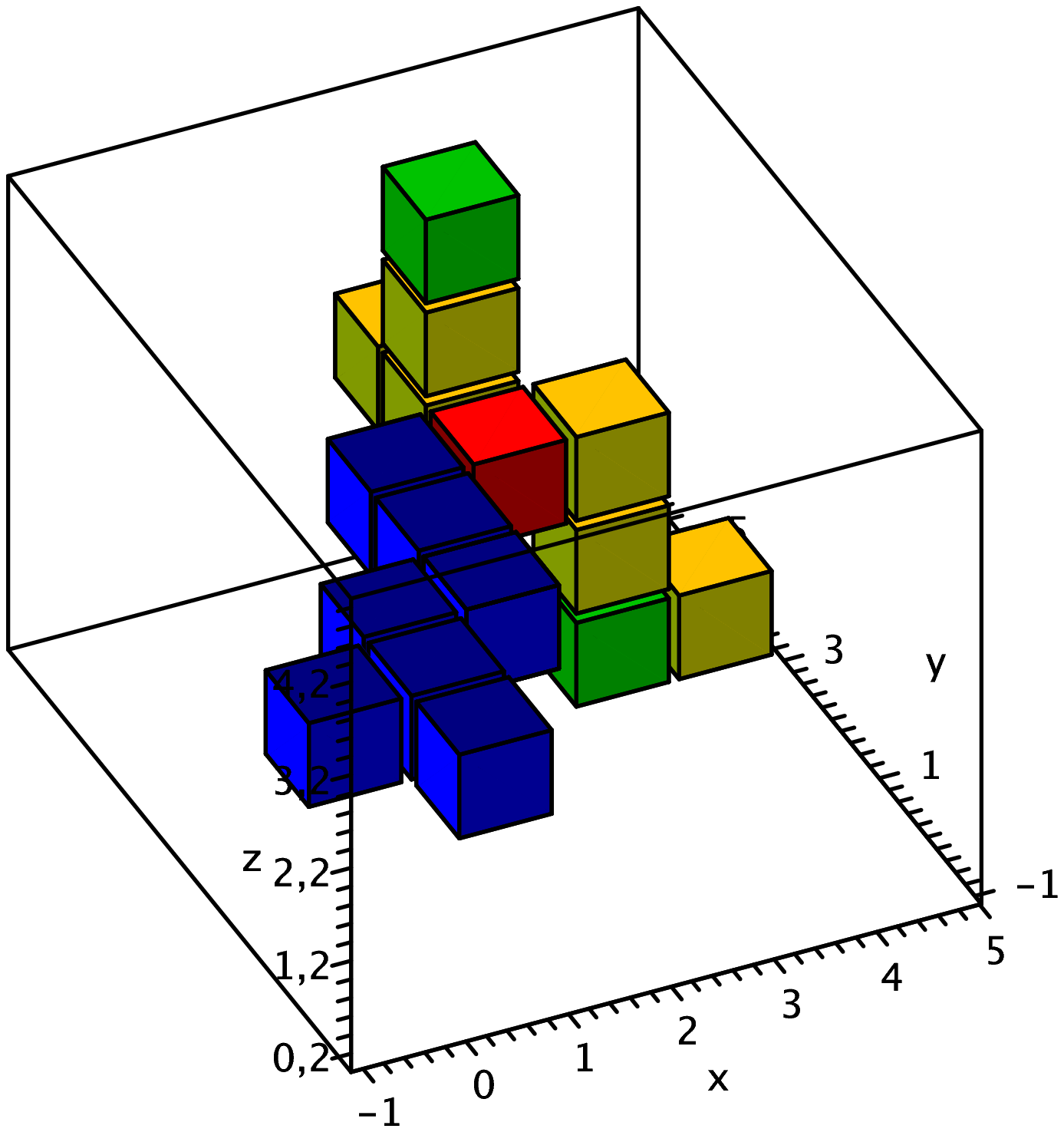}}
    \hspace{1cm}
\caption{Non diagonal Polyominoes  }
\label{fig4} % caption for the whole figure
\end{figure}

Finally, observing that two pairs of orthogonal planes determine two disjoint sets of $2D\times 2D$ polyominoes, we obtain the generating function $P_{2D\times 2D}(x,y,z)$ for the total number of non diagonal $2D\times2D$ polyominoes by adding the three generating functions corresponding to each pair of orthogonal planes:
\begin{equation}\label{eq19}
P_{2D\times 2D}(x,y,z)=P_{xy\times yz}+P_{xy\times xz}+P_{xz\times yz}.
\end{equation}

%===================================================% subsection  $2D\times 2D$ polyominoes
%=================================================
\subsection{Skew cross polyominoes} We define our second family of non diagonal polyominoes as follow. A {\it skew cross} polyomino starts with a central cell $c$ of degree three which is the corner cell of three $2D$ corner-polyominoes mutually perpendicular. We partition this family in two types. {\bf Type a)} The cell $c$ has two parallel contact faces.  {\bf Type b)} The three contact faces of the central cell $c$ are incident to a vertex of $c$. These two families are illustrated in figure \ref{fig5}.
 \begin{figure}
 \centering
\subfigure[] 
{ \label{fig:5:a}
    \includegraphics[width=2.9 cm]{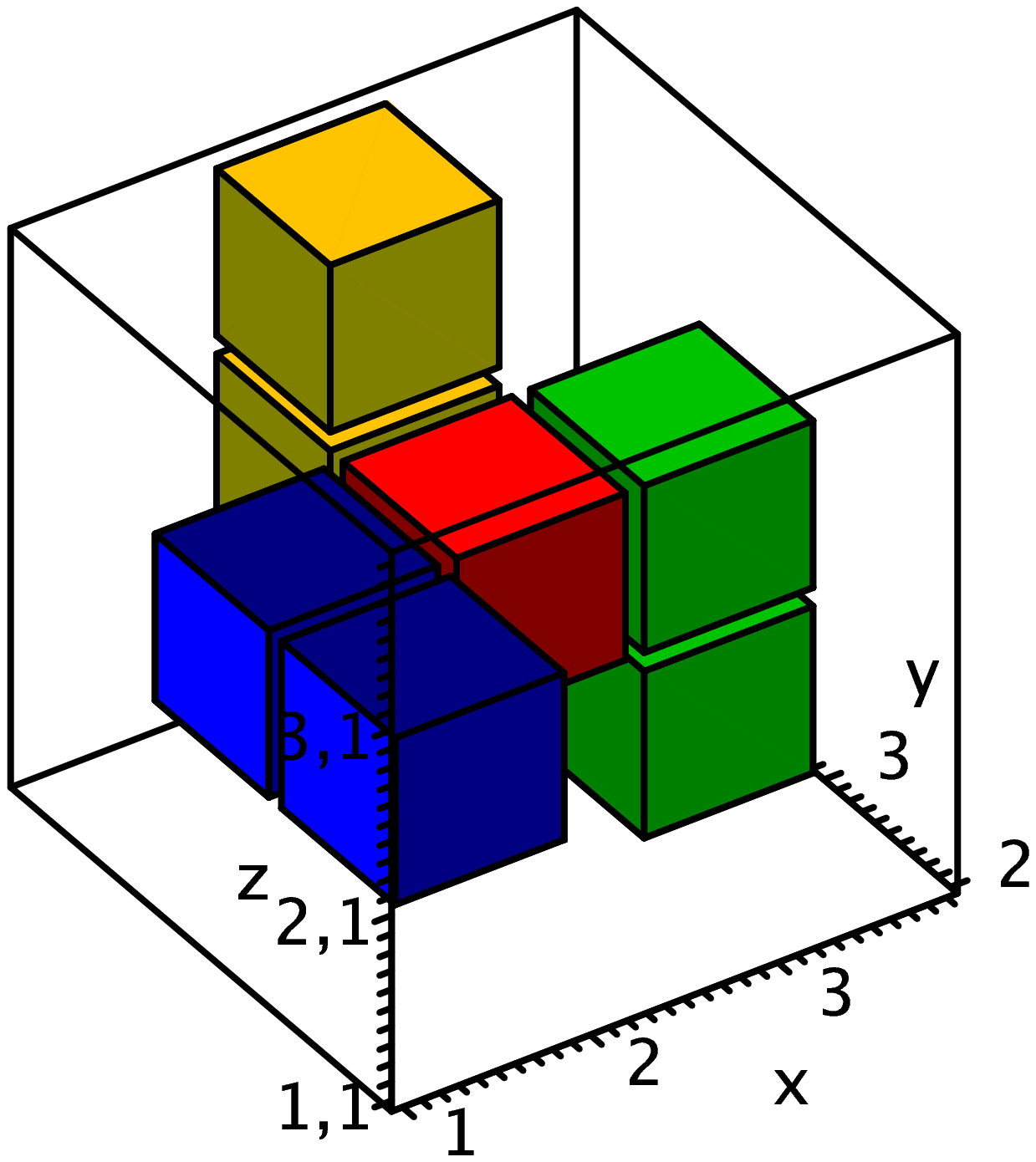}}
\hspace{.0 cm}
\subfigure[] 
{\label{fig:5:b}
    \includegraphics[width=2.9 cm]{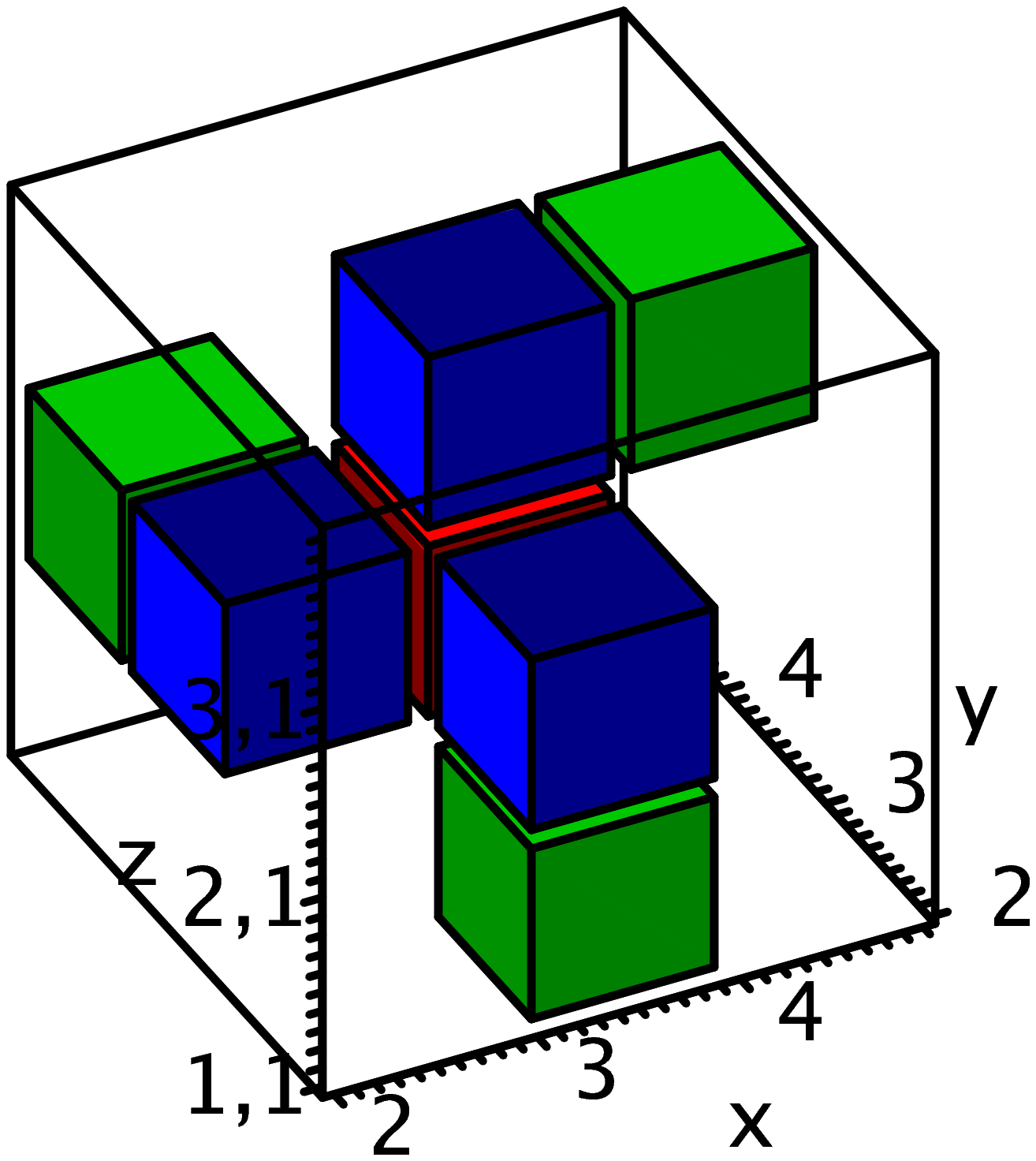}}
    \hspace{.0 cm}
    \caption{Skew cross polyominoes} \label{fig5}
    \end{figure}
    
 \paragraph{\bf Type a)} We start by establishing the generating function for each of the three $2D$ corner-polyominoes needed to obtain a skew cross polyomino of type $a)$. To fix the ideas, suppose that the three contact faces of the cell $c$ have already been chosen and that the $2D$ corner-polyomino red and green is in the $yz$ plane, the yellow part is in the $xz$ plane and the blue part is in the $xy$ plane as illustrated in figure \ref{fig:5:a}. We have the choice between the red central cell $c$ and the cell in contact with it as the corner cell of the $2D$ corner-polyomino. We choose the cell in contact with $c$. For the $2D$ corner-polyomino in the $yz$ plane, the $z$ length must be at least $2$ and the generating function is 
 \begin{equation}\label{eq20}
P_{c,z\geq 2}(y,z) =yz\left(\frac{2}{(1-y-z)}- \frac{1}{(1-y)(1-z)}- \frac{1}{(1-y)}\right)
\end{equation}
Similarly for the $2D$ corner-polyominoes in the $xy$ and $xz$ planes, we obtain
\begin{align}\label{eq21}
P_{c,x\geq 2}(x,y)=xy\left(\frac{2}{(1-x-y)}- \frac{1}{(1-x)(1-y)}- \frac{1}{(1-y)}\right)\\
\label{eq22}
P_{c,z\geq 2}(x,z)=xz\left(\frac{2}{(1-x-z)}- \frac{1}{(1-x)(1-z)}- \frac{1}{(1-x)}\right)
\end{align}
The product of the three  series (\ref{eq20}),(\ref{eq21}), (\ref{eq22}) gives the generating function of skew cross polyominoes of type $a$ with preselected faces of the central cell provided we adjust with the fact that the $z$ length of the cell $c$ was counted twice and its $y$ lenght was not counted. Now once the faces of $c$ are chosen, there is some freedom for the direction of the $2D$ corner-polyominoes. Indeed, if we choose first one of the two  directions of the corner-polyomino coming from the face between opposite faces, then we still have to choose between two directions for another $2D$ corner-polyomino. For two faces in the $xz$ plane, we have two choices for a face $yz$. Thus the generating function $SC_{a1}(x,y,z)$ of skew crosses  of type $a$ when two faces  in plane $xz$ and one face in the plane $yz$ are chosen is :
\begin{align}\label{eq24}
\nonumber
SC_{a1}(x,y,z)&= 
\frac{4y}{z}P_{c,z\geq 2}(y,z)P_{c,x\geq 2}(x,y)P_{c,z\geq 2}(x,z)\\
&=\frac{4x^3y^3z^3(1+x-z)(1+y-z)(1+y-x)}{(1-x)^2(1-y)^2(1-z)^2
(1-y-z)(1-y-x)(1-x-z)}
\end{align}
Knowing that there are 12 triplets of faces of type $a$ on a cell $c$, we  sum six generating functions similar to  equation (\ref{eq24}) and obtain the generating function $SC_a(x,y,z)$ for skew crosses of type $a$ which simplifies to :
\begin{equation*}
SC_a(x,y,z)=\frac{-16x^3y^3z^3((1-x+y)(1-x+z)+(1-y+x)(1-y+z)+(1-z+x)(1-z+y)
)}
{(1-x)^2(1-y)^2(1-z)^2(1-y-z)(1-x-y)(1-x-z)}
\end{equation*}

\paragraph{\bf Type $\boldsymbol b$} To establish the generating function of skew crosses of type $b$, we choose three faces of the cell $c$ incident to one vertex of $c$. We choose each cell in contact with a face of $c$ to be the corner cell of a $2D$ corner-polyomino. There are two possibilities once the three corner cells are chosen. Here is the generating function for a given set of three faces corresponding to one vertex of $c$ :
\begin{align*}
P_{c,z\geq 2}(x,z)\times P_{c,x\geq 2}(x,y)\times P_{c,y\geq 2}(y,z)+ P_{c,y\geq 2}(x,y)\times P_{c,z\geq 2}(y,z)\times P_{c,x\geq 2}(x,z)
\end{align*}

There are $8$ sets of three faces of $c$ incident to one vertex of $c$  and for each of these sets, we obtain the same generating function which means that the generating function for  skew crosses of type $b$ is the following:
\begin{align*}
SC_b(x,y,z)&=8( P_{c,z\geq 2}(x,z)\times P_{c,x\geq 2}(x,y)\times P_{c,y\geq 2}(y,z)\\
&+ P_{c,y\geq 2}(x,y)\times P_{c,z\geq 2}(y,z)\times P_{c,x\geq 2}(x,z))\\
&=
\frac{16x^3y^3z^3((1-x+y)(1-x+z)+(1-y+x)(1-y+z)+(1-z+x)(1-z+y)
-4)}
{(1-x)^2(1-y)^2(1-z)^2(1-y-z)(1-x-y)(1-x-z)}.
\end{align*}
To obtain the generating function for all skew crosses $SC(x,y,z)$, we simply add the generating functions for types $a$ and $b$ and observe that they have the same denominator and that terms in  the numerator cancel except for a constant  so that we obtain
 \begin{equation}\label{eq25}
SC(x,y,z)=\frac{64x^3y^3z^3}{(1-x-y)(1-x-z)(1-y-z)(1-x)^2(1-y)^2(1-z)^2}.
\end{equation}

%===================================================% section  Main result
%===================================================
\section{Main result}\label{sec4}
So far we have established the generating function for three disjoint classes of $3D$ polyominoes. We claim that the union of these three classes forms the whole set of $3D$ inscribed polyominoes with minimal volume. 
\begin{theorem}\label{th3}
The total number $p_{3D,min}(b,k,h)$ of polyominoes inscribed in a $b\times k\times h$ rectangular prism and minimal volume $b+k+h-2$ is the sum of diagonal polyominoes and non diagonal polyominoes of type $2D\times 2D$ and skew crosses:
\begin{equation*}
p_{3D,min}(b,k,h)=diag(b,k,h)+p_{2D\times 2D}(b,k,h)+sc(b,k,h).
\end{equation*}
\end{theorem}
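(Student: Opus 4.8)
The plan is to prove the stronger set-theoretic statement that $P_{3D,min}(b,k,h)$ is the \emph{disjoint} union of the three families of diagonal, $2D\times 2D$ and skew cross polyominoes; the stated numerical identity then follows by taking cardinalities. This requires two things: that the three families are pairwise disjoint, and that together they exhaust the set of minimal inscribed polyominoes.

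The engine of the whole argument is a rigidity lemma that I would establish first, combining the projection characterization from the introduction with a cell count. A minimal polyomino $P$ in a $b\times k\times h$ prism has exactly $b+k+h-2$ cells, while each projection, being a minimal $2D$ polyomino, has area $b+k-1$, $b+h-1$ or $k+h-1$. Hence exactly $h-1$ cells of $P$ are stacked above an already-occupied column in the $z$ direction, and symmetrically $b-1$ in $x$ and $k-1$ in $y$. Counting face-adjacencies one direction at a time, a column of $m$ cells carries at most $m-1$ adjacencies, so the number $e_z$ of $z$-adjacencies satisfies $e_z\le h-1$, and likewise $e_x\le b-1$, $e_y\le k-1$. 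Since $P$ is connected, $e_x+e_y+e_z\ge |P|-1=(b-1)+(k-1)+(h-1)$, forcing equality throughout: $P$ is a \emph{tree} in the face-adjacency graph, its edges split as exactly $b-1$, $k-1$, $h-1$ along the three axes, and every occupied line parallel to an axis is a single gap-free run. This rigid skeleton is what confines $P$ to the three families.

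For disjointness I would attach a separating invariant to each family. A diagonal polyomino carries a $3D$ hook seated in a corner of the prism, hence occupies two opposite corners joined by a stair; this is exactly the condition that some sub-prism sharing three faces with the circumscribed prism contains a $3D$ corner-polyomino, and the other two families are non-diagonal and fail it. To separate skew crosses from $2D\times 2D$ polyominoes, I would use that a skew cross possesses one central cell of degree three that is simultaneously the corner of three mutually perpendicular $2D$ corner-polyominoes, while a $2D\times 2D$ polyomino has its two planar pieces meeting through a skew middle segment and so admits no such common vertex. Verifying that a corner-seated hook, a single triple-corner vertex, and a skew middle segment cannot coexist yields the three pairwise disjointness statements.

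The exhaustiveness step is the core of the proof and the part I expect to be hardest. Given an arbitrary minimal $P$, I would feed the rigid skeleton into a case split according to whether $P$ meets a corner of the prism. If it does, the tree structure forces the corner cell to launch a $3D$ corner-polyomino, and following the unique gap-free stair indicated by the projections to the opposite corner places $P$ in the diagonal family. If $P$ avoids all corners, the hooks of its three hook-stair-hook projections must be realized by planar corner-polyominoes anchored on edge-interior cells, and the tree then has either a single degree-three branch point of the triple-corner type, giving a skew cross, or two separated planar pieces linked by a skew segment, giving a $2D\times 2D$ polyomino. The delicate point, where the rigidity lemma must be pushed hardest, is to prove that these alternatives are genuinely exhaustive: that the simultaneous hook-stair-hook constraints on all three projections, together with the exact directional edge counts, forbid any extra branch point, any misalignment of the three stairs, or any configuration not covered above, so that no fourth type of minimal polyomino can occur.
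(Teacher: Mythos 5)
Your rigidity lemma is correct, and it is in fact a nice rigorous substitute for an observation the paper leaves informal: writing $e_x,e_y,e_z$ for the numbers of face-adjacencies of $P$ in the three axis directions, comparing $|P|=b+k+h-2$ with the areas $b+k-1$, $b+h-1$, $k+h-1$ of the three minimal projections gives $e_z\le h-1$, $e_y\le k-1$, $e_x\le b-1$, while connectedness gives $e_x+e_y+e_z\ge |P|-1=b+k+h-3$; so equality holds throughout, the adjacency graph is a tree, and every occupied axis-parallel line is a single contiguous run. The problem is the use you make of it.

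Your exhaustiveness step rests on the dichotomy ``$P$ contains a corner cell of the prism if and only if $P$ is diagonal,'' and this is false in the direction you need. In the paper's definition of a diagonal polyomino, each hook is a corner-polyomino whose distinguished corner cell is the \emph{inner} corner of its sub-prism---the cell where the stair attaches---and the hook fans out from there toward three faces of the prism; it need not (and generically does not) contain the prism's corner cell. Concretely, a tripod--stair--tripod polyomino, or the $3D$ cross (which the paper explicitly treats as a diagonal polyomino lying on all four diagonals), touches none of the eight corners of the prism. Under your case split these polyominoes land in the ``avoids all corners'' branch, where your only permitted outcomes are a skew cross (a single degree-three branch cell) or a $2D\times 2D$ polyomino (two planar pieces joined by a skew segment); neither holds, since the central cell of a $3D$ cross has degree up to six and a tripod--stair--tripod has two branch cells. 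So your alternatives are not exhaustive, and the same misconception invalidates your separating invariant for diagonality (``occupies two opposite corners joined by a stair''). The paper's proof avoids this trap: it classifies polyominoes by where the two contact cells of the unique floor-to-ceiling stair project inside the hook-stair-hook structure of the projection $\Pi(P)$, yielding ten cases, each matched to one of the three families. Repairing your argument would essentially require replacing the corner dichotomy by a case analysis of that kind, because corner-incidence is simply not the invariant that detects diagonality.
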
 
\begin{proof}
In order to prove this result, we introduce a second classification of $3D$ polyominoes and we show that every set of polyominoes forming this classification belongs to one of our three families of polyominoes. 

Consider the orthogonal projection $\Pi(P)$ of an inscribed $3D$ polyomino $P$ on the upper face of the prism. Observe that $\Pi(P)$ is a $2D$ inscribed polyomino of minimal area and therefore possesses the geometric structure {\it hook-stair-hook} of minimal $2D$ polyominoes. Two cells of the $3D$ polyomino will play a special role in our classification.  We call them {\it contact cells} and define them as follow. For every polyomino $P\in P_{3D,min}(b,k,h)$ there is a unique $3D$ stair connecting the lower and upper faces of the prism which forms a non decreasing path from floor to ceiling. The two contact cells $c_1, c_2$ are respectively, the last cell touching the floor and the first cell touching the ceiling in this path. 
We will use the positions of the projections  $\Pi(c_1),\Pi(c_2)$  in our classification. If, without loss of generality, we fix a $2D$ diagonal in the upper face  to give a direction to the {\it hook-stair-hook} structure, there are ten positions of the pair $\Pi(c_1),\Pi(c_2)$ with respect to the upper hook, each pair giving a class in this classification of  $P_{3D,min}(b,k,h)$. The ten positions can be seen in figure \ref{fig6} where $\Pi(c_1)$ and $\Pi(c_2)$ are in black. Observe that these ten cases do not form a complete partition of the set  $P\in P_{3D,min}(b,k,h)$ but our goal is to provide a complete set of representatives up to symmetry so that every other case is similar to one of the cases considered. 

Another geometric observation used throughout this proof is the fact that minimality forces  the cells that are not part of the $3D$ stair from $c_1$ to $c_2$ to form connected components that are all horizontal and attached to the $3D$ stair. Therefore the only cells that do not appear in the projection $\Pi(P)$ belong to the $3D$ stair between the two contact cells. We will show that the polyominoes belonging to each of the $10$ cases also belong to one of the three families of polyominoes, namely diagonal, $2D\times 2D$ and skew crosses.\\ 

\begin{figure}[htbp]\label{fig6}
\begin{center}
\includegraphics[width=.90\textwidth]{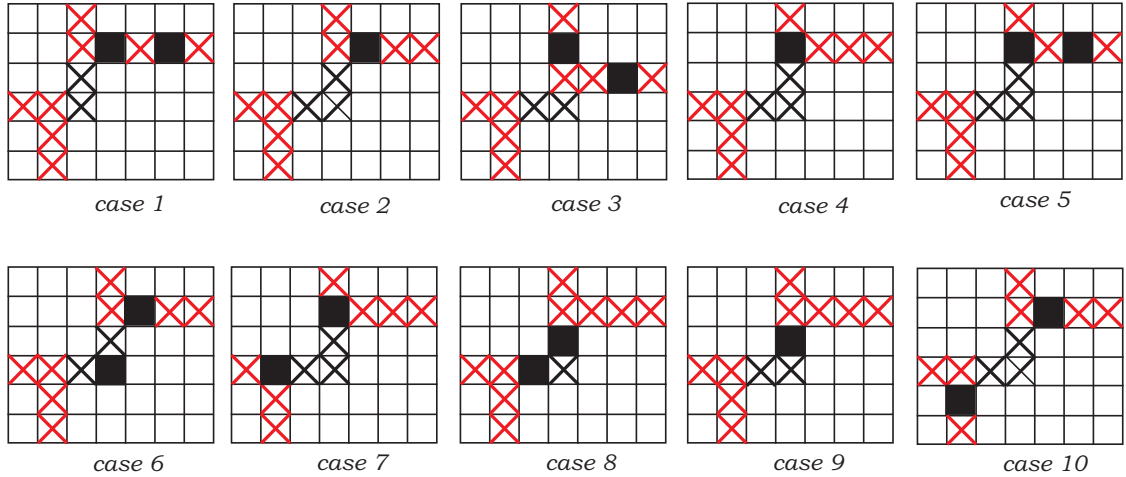}
\caption{ Classifcation of the projection of  $3D$  polyominoes on the upper face of the prism}
\end{center}
\end{figure}

\noindent
\paragraph{\bf Cases 1 and 2}
The polyominoes of cases $1$ and $2$ are $2D\times 2D$ polyominoes and are illustrated in figure \ref{fig:4:a} and \ref{fig:4:b}. Indeed the fact that the contact cells are projected on the same arm of a  $2D$ hook implies that the $3D$ stair is in a vertical plane. The other cells of the polyomino form two connected components. One component is a pilar that ends the hook arm containing $\Pi(c_1),\Pi(c_2)$ and the other contains the other hook arm which is a pilar and a $2D$ corner. \\

\noindent
\paragraph{\bf Case 3}
In case $3$, there is a contact cell projected on each arm of a $2D$ hook (see figure \ref{fig:4:c}). The part of the polyomino that is not projected on this hook is a $2D$ horizontal corner-polyomino that is neither on the lower face nor on the upper face of the prism. The two parts projected on each arm of the hook form also a $2D$ corner-polyomino so that we have a total of three $2D$ corner-polyominoes mutually perpendicular all connected to a central cell projected on the corner of the hook thus forming a skew cross.\\

\noindent
\paragraph{\bf Case 4} In case $4$, we have a full vertical pilar from floor to ceiling projected on the corner of a $2D$ hook.
Thus each part of the polyomino projected on an arm of the hook must be a horizontal pilar and so the remaining part of the $3D$ polyomino must be a  $2D$ horizontal corner-polyomino.  We thus have three horizontal connected components at various heigths with two possible scenarios. Either the $2D$ corner-polyomino is between the two pilars (figure  \ref{fig:7:a}) or it is not (figure  \ref{fig:7:b}). In the first case, we have a skew cross. In the second case we have a $3D$ diagonal polyomino. \\

\noindent
\paragraph{\bf Case 5} In case $5$ (see figure  \ref{fig:7:c}), there is a $3D$ stair in a vertical plane which is not a pilar and is projected on the arm of a $2D$ hook, two horizontal pilars whose projection completes the arms of the hook. The remaining component  must be a $2D$ horizontal corner-polyomino. There are again two subcases. Either the $2D$ corner-polyomino is at a height between the height of the two horizontal pilars either it is not.  In the first case, the polyomino is a skew cross (figure  \ref{fig:7:c}). In the other case, the polyomino is a $3D$ diagonal. \\

\begin{figure}
 \centering
\subfigure[] 
{ \label{fig:7:a}
    \includegraphics[width=3.6 cm]{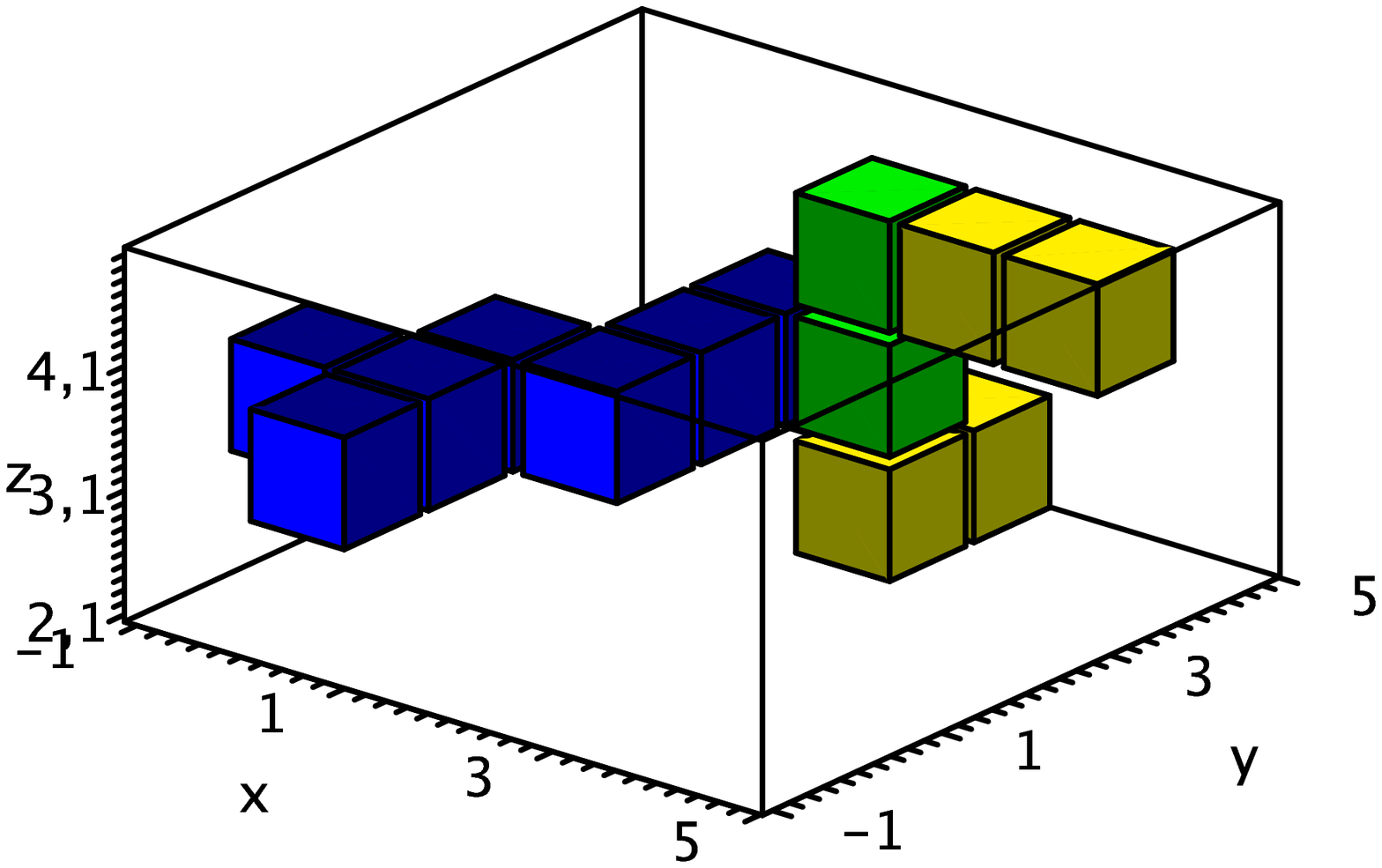}}
\hspace{.0 cm}
\subfigure[] 
{\label{fig:7:b}
    \includegraphics[width=3.6 cm]{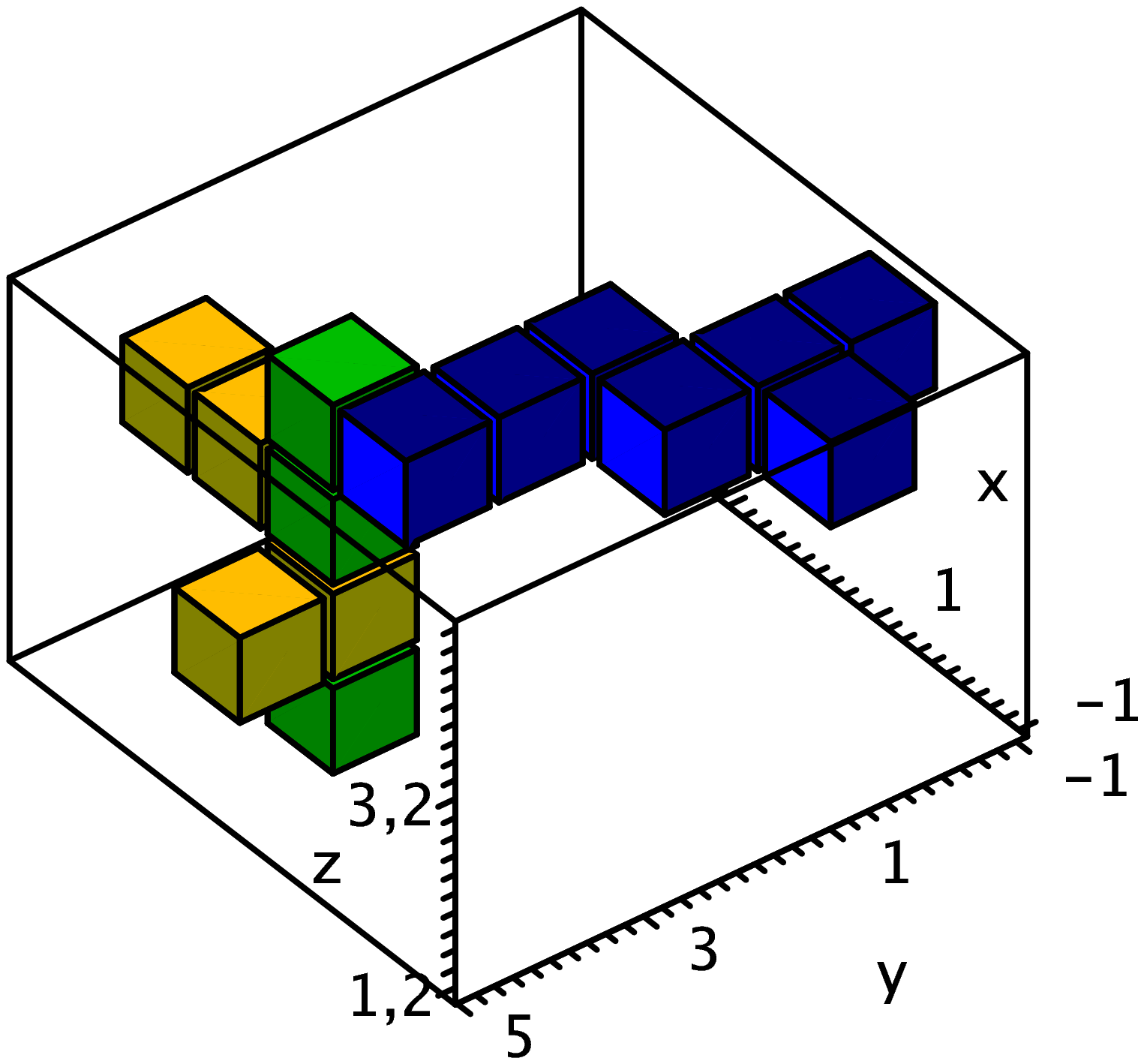}}
    \hspace{.0 cm}
     \subfigure[] 
{ \label{fig:7:c}
    \includegraphics[width=3.6 cm]{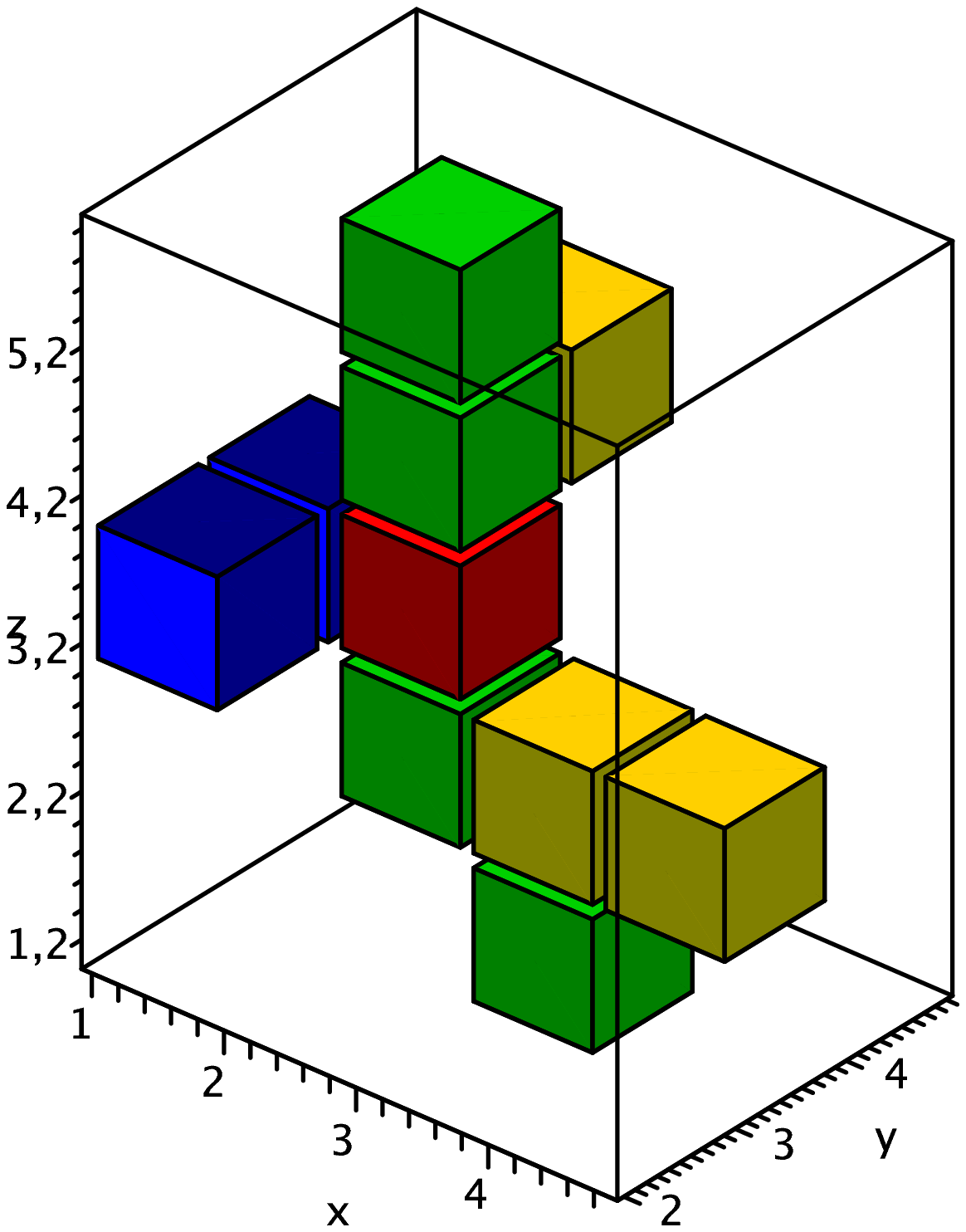}}
\hspace{.0 cm}
    \subfigure[] 
{ \label{fig:7:d}
    \includegraphics[width=3.6 cm]{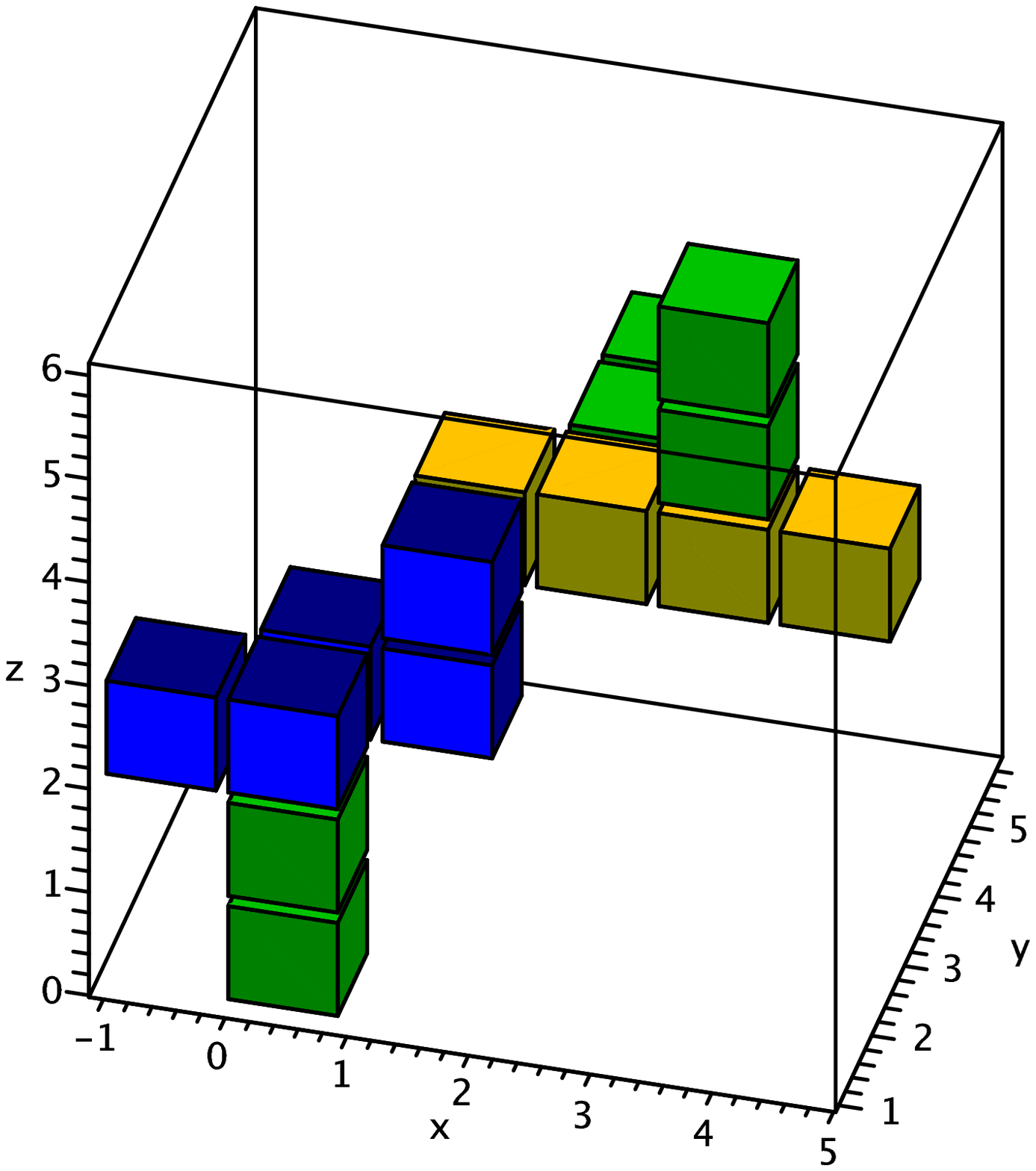}}
\hspace{.0 cm}
 \caption{Polyominos  of cases $4$, $5$ and $6$} \label{fig7}
  \end{figure}

\noindent
\paragraph{\bf Case 6}Here we have a $3D$ stair between the two contact cells whose projection joins a cell inside a hook with a cell outside the hook. Again, since the other connected components are horizontal $2D$ polyominoes, there are two horizontal pilars whose projection complete the arms of the hook. These constraints are sufficient to imply that the polyomino is a $3D$ diagonal. \\
\noindent
\paragraph{\bf Case 7} Here the $3D$ stair connecting the two contact cells is projected on the $2D$ stair so that the remaining cells form  $4$ horizontal pilars, or $3$ pilars when there is no second hook on $\Pi(P)$ (figure \ref{fig:8:a}). No matter what the relative heigth of these pilars is, they always form a $3D$ diagonal polyomino. \\

\noindent
\paragraph{\bf Cases $8$ and $9$} In these cases, the polyomino can always be decomposed in three parts: one $3D$ stair connecting the contact cells and two $2D$ horizontal corner-polyominoes with corner in contact with the $3D$ stair (figures \ref{fig:8:b} and \ref{fig:8:c})). This always give a $3D$ diagonal polyomino.  \\

\noindent
\paragraph{\bf Case $10$} Here the contact cells are projected on different $2D$ hooks.  These polyominoes always contain a $3D$ stair between  the two cells projected on the corners of the $2D$ hooks. Each of these two cells is the corner cell of  a $3D$ hook.  Thus the three parts form a $3D$ diagonal (see figure \ref{fig:8:d}) polyomino.
 \begin{figure}
 \centering
\subfigure[] 
{ \label{fig:8:a}
    \includegraphics[width=3.6 cm]{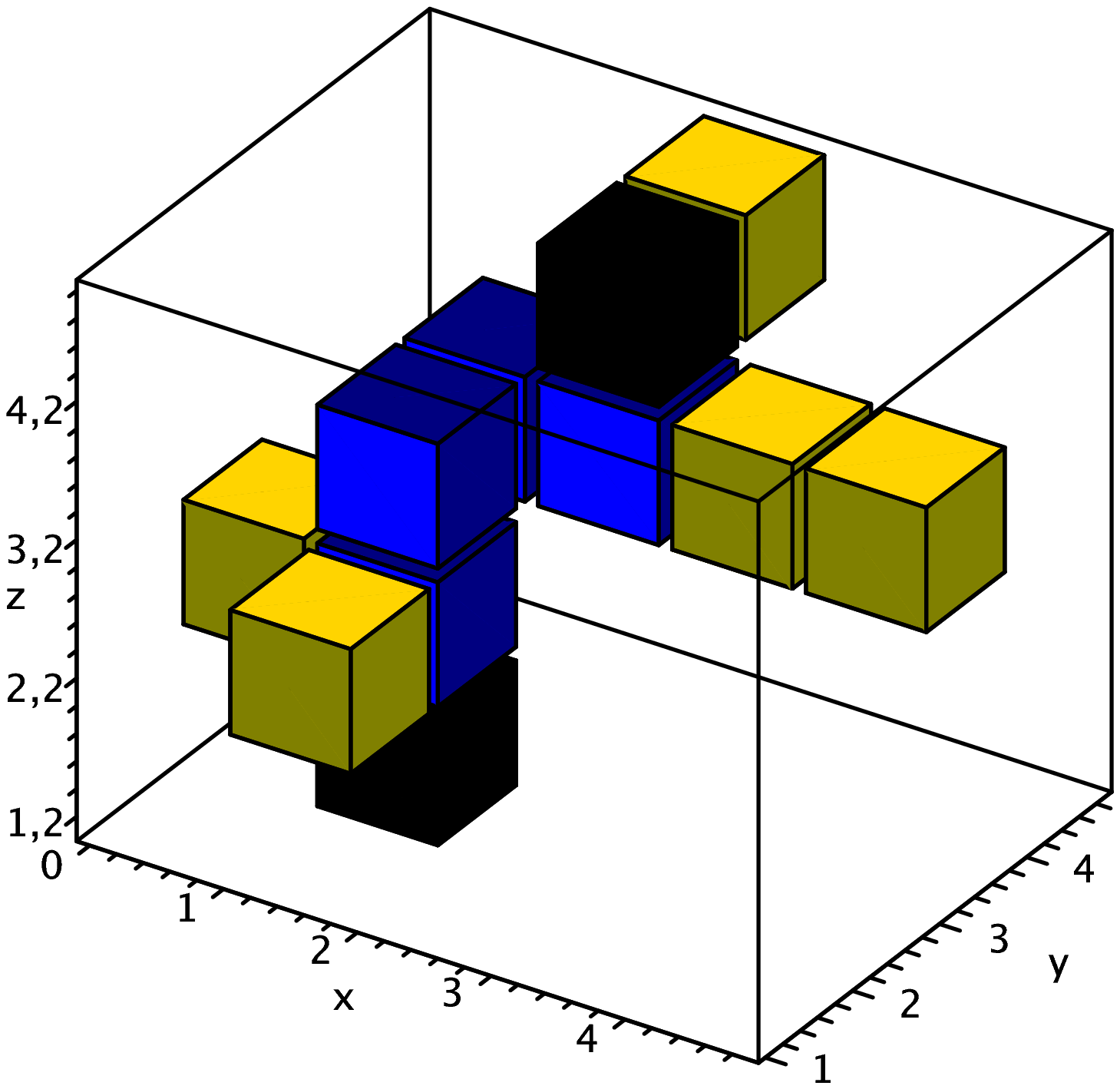}}
\hspace{.0 cm}
\subfigure[] 
{\label{fig:8:b}
    \includegraphics[width=4 cm]{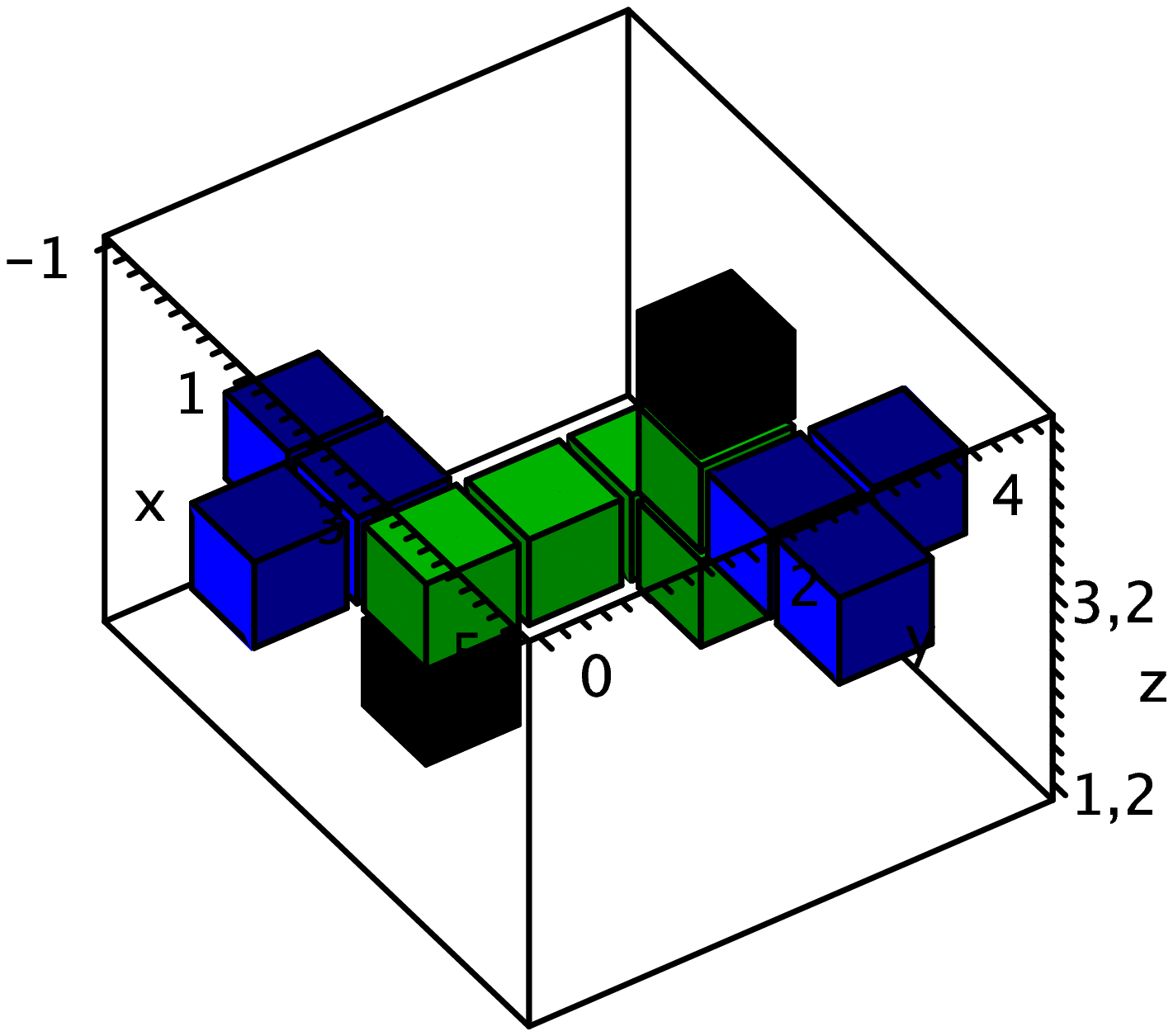}}
    \hspace{.0 cm}
\subfigure[] 
{\label{fig:8:c}
    \includegraphics[width=3.6 cm]{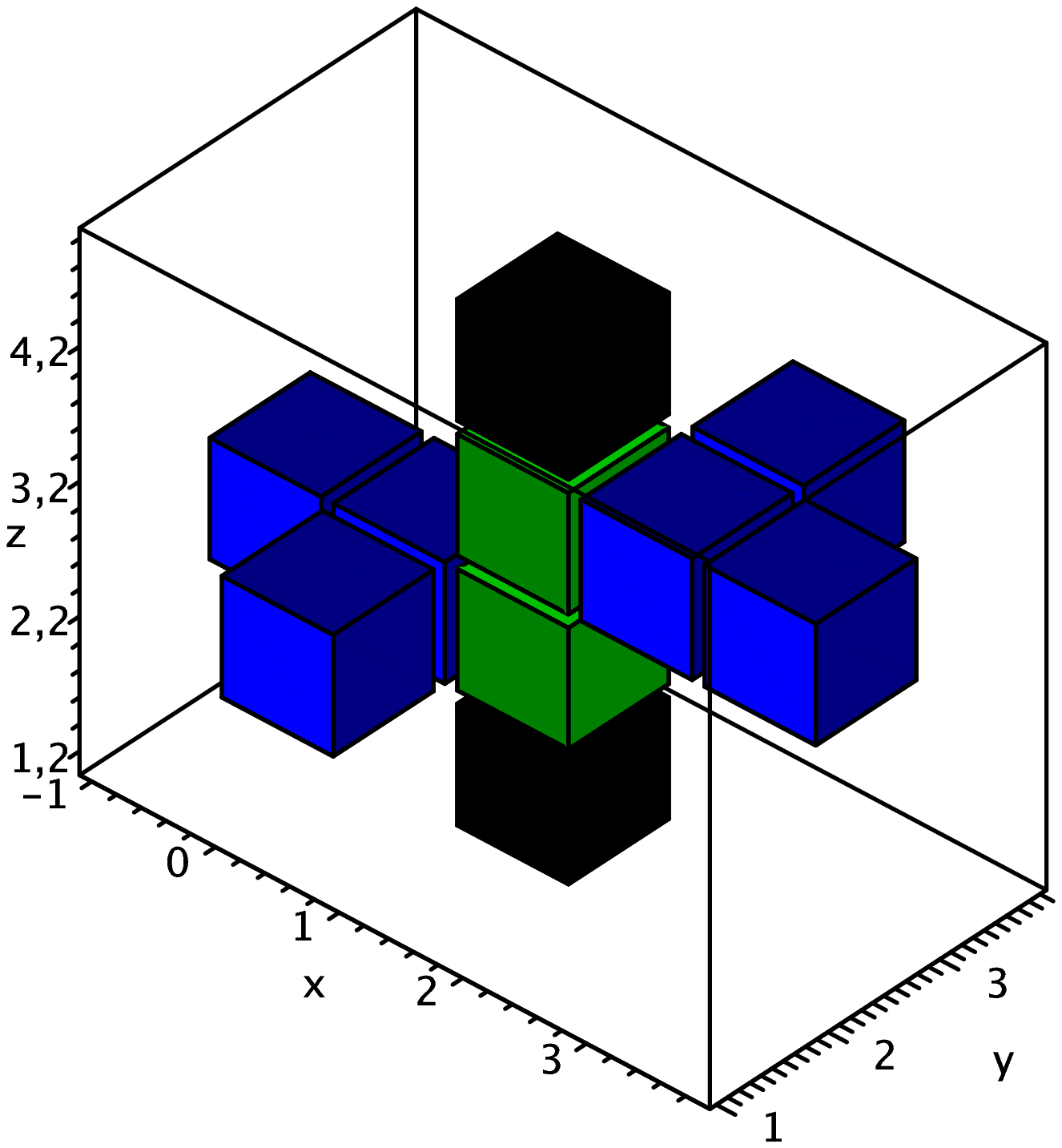}}
    \subfigure[] 
{\label{fig:8:d}
    \includegraphics[width=3.6 cm]{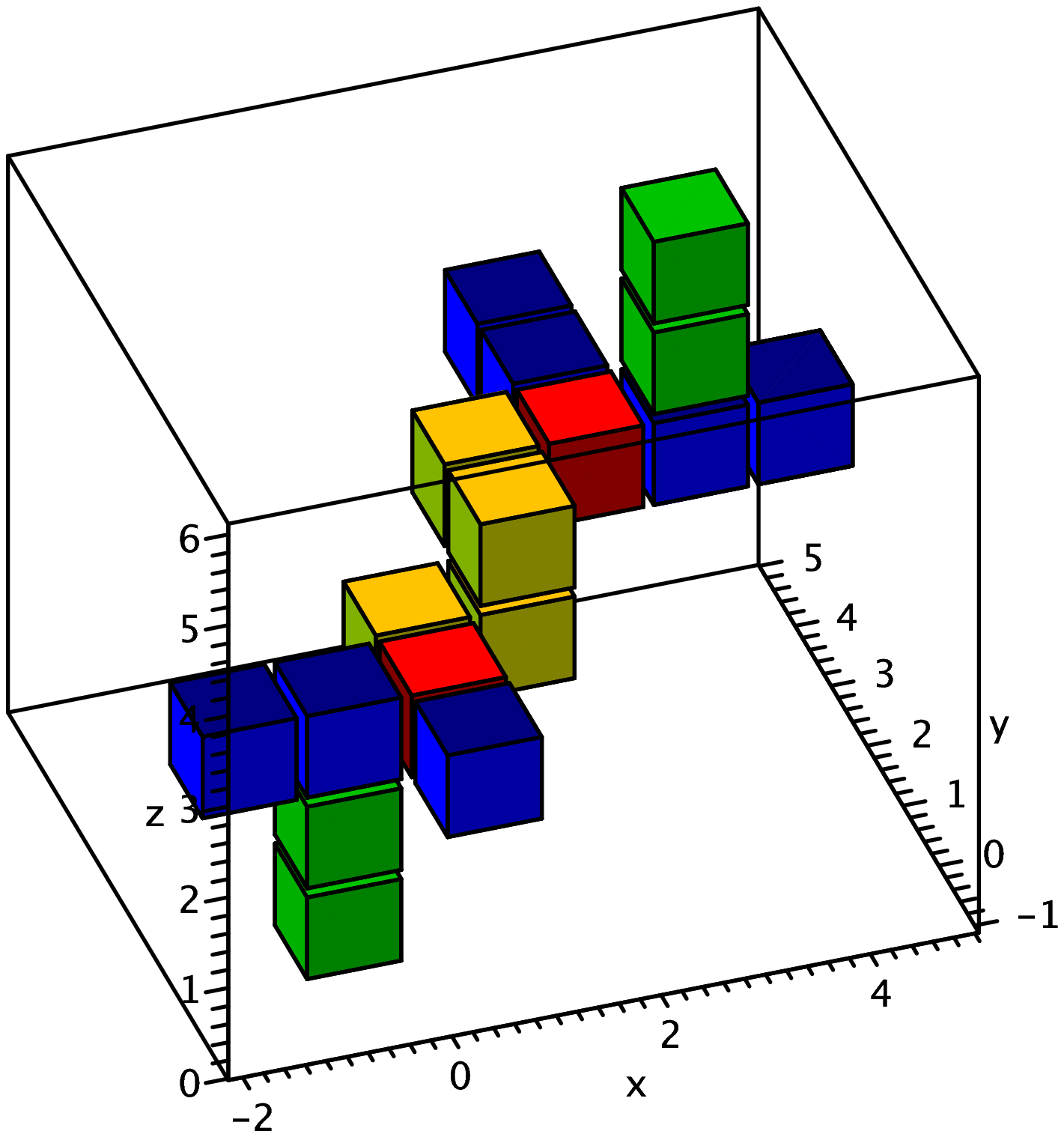}}
 \caption{Polyominos  of cases $7$, $8$, $9$, $10$} \label{fig8}
  \end{figure}
\end{proof}
%%%%%%%%%%%%%%%%%%%%%%%%%
%\section{Exact formulas}
%%%%%%%%%%%%%%%%%%%%%%%%%
\section{Exact formulas}
In  theorem \ref{th3} and other results of this paper, we chose to break generating functions in several parts because their full explicit formulation is too long. We did not provide exact formulas corresponding to all generating functions for similar reasons: the exact expressions are not always reducible. For example, here is  an exact expression for the number $sc(b,k,h)$ of skew crosses inscribed in a $b\times k\times h$ prism true for  integers $b\geq3,\; k\geq 3,\; h\geq 3$ that we could not reduce.

\begin{equation}
sc(b,k,h)=64\sum_{i=0}^{b+k-6}\sum_{r=0}^{i}\sum_{j=0}^{b-3-r}
\binom{b}{3+r+j}\binom{k}{3+i+j-r}\binom{h}{3+i}
\end{equation}
but if we turn our interest to the number of all minimum inscribed polyominoes  of a given volume $n$, we obtain interesting exact formulas that lead to asymptotic information. In what follows, we will give exact formulas for each of the three families of $3D$ polyominoes  presented in  sections $2$ and $3$ to obtain one for the set $P_{3D,min}(n)$ of inscribed minimal polyominoes of volume $n$.\\

\paragraph{\bf Skew crosses}The recipe is the same for the three families : setting $x=y=z$ in the generating function (\ref{eq25}), we obtain the generating function
 \begin{equation*}
SC(x)=\sum_{n\geq 1}sc(n)x^{n+2}=
\frac{64x^9}{(1-2x)^3(1-x)^6}
\end{equation*}
and the exact formula for the number $sc(n)$ of skew crosses of volume $n$:
 \begin{equation}\label{eq27}
sc(n-2)=2^{n+2}\left(n^2-27n+194\right)-8\left(\frac{n^5}{15}+
\frac{11n^3}{3}+12n^2+\frac{844n}{15}+96\right)
\end{equation}
\paragraph{\bf $\mathbf{2D\times2D}$ polyominoes} We set again $x=y=z$ in equations (\ref{eq18}) and  (\ref{eq19}) to obtain
\begin{align}\label{eq28}
\nonumber
P_{2D\times2D}(x)=&\sum_{n\geq 1}p_{2D\times2D}(n)x^{n+2}=
\frac{6x^8(1+2x)^2}{(1-2x)^2(1-x)^7}\\
p_{2D\times2D}(n-2)=&3\cdot2^{n+2}\left(n-15 \right)+
\left( \frac{3}{40}n^6-\frac{33}{40}n^5+\frac{65}{8}n^4
-\frac{183}{8}n^3+\frac{544}{5}n^2+\frac{147}{10}n+234\right)
\end{align}

\paragraph{\bf Diagonal polyominoes}For diagonal plyominoes, we have to modify the generating function $Diag(x,y,z)$ so that it becomes exact also for terms containing one of the variables $x,y,z$ with  power $1$. This modification is done by removing from  $1Diag(x,y,z)$ the degenerate cases counting $2D$ polyominoes which cannot be part of the inclusion-exclusion calculus in equation \ref{eq15}. Then putting $x=y=z$ we get
\begin{align}\label{eq29}
\nonumber
Diag(x)=&\sum_{n\geq 3}diag(n)x^{n+2}=
\frac{x^3\left( 36x^8+129x^6-207x^5+234x^4-126x^3+49x^2-10x+1\right)}{(1-3x)(1-2x)^2(1-x)^6}\\
diag(n-2)=&\frac{121}{48}3^n-2^n(45n-411) -\left(
\frac{53}{120}n^5-\frac{15}{8}n^4+\frac{823}{24}n^3-6n^2+\frac{22711}{60}n+\frac{4995}{16}
\right)
\end{align}
so that, adding equations  (\ref{eq27}),  (\ref{eq28}), (\ref{eq29}),we finally obtain an exact formula for $p_{3D,min}(n)$.
\begin{proposition}The generating function and exact formula for the numbers $p_{3D,min}(n)$ of $3D$ inscribed minimal polyominoes of volume $n$ are 
\begin{align*}
P_{3D,min}(n)&=\sum_{n}p_{3D,min}(n)x^{n+2}\\
&=\frac{x^3(72x^{10}+36x^9+510x^8-1117x^7+1276x^6-1155x^5+710x^4-293x^3+81x^2-13x+1)}{(1-3x)(1-2x)^3(1-x)^7}\\
p_{3D,min}(n)&=\frac{11^2\cdot 3^{n+1}}{16}+2^{n+2}(4n^2 - 125n + 741)\\
&\hspace{.5cm}+ \frac{3n^6}{40} - \frac{9n^5}{10} - \frac{7n^4}{2} - \frac{133n^3}{2} - \frac{1931n^2}{5} - \frac{31727n}{20} - \frac{47739}{16}
\end{align*}
\end{proposition}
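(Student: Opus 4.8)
The plan is to obtain the statement as an essentially immediate consequence of Theorem~\ref{th3} together with a mechanical consolidation of the three univariate generating functions already produced in this section. Theorem~\ref{th3} asserts that the three families are disjoint and that their union exhausts $P_{3D,min}(b,k,h)$, so the pointwise identity $p_{3D,min}(b,k,h)=diag(b,k,h)+p_{2D\times 2D}(b,k,h)+sc(b,k,h)$ holds for every prism. Summing this identity over all prisms of a fixed volume $n=b+k+h-2$ is exactly what the substitution $x=y=z$ performs on the generating functions, so I would first record
\begin{equation*}
P_{3D,min}(x)=Diag(x)+P_{2D\times 2D}(x)+SC(x),
\end{equation*}
where the three summands are the functions displayed in (\ref{eq29}), (\ref{eq28}) and in the formula for $SC(x)$. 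Both halves of the proposition then reduce to adding these three pieces.

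For the rational form I would put the three fractions over their least common denominator. The denominators are $(1-2x)^3(1-x)^6$ for $SC(x)$, $(1-2x)^2(1-x)^7$ for $P_{2D\times 2D}(x)$, and $(1-3x)(1-2x)^2(1-x)^6$ for $Diag(x)$, whose lcm is $(1-3x)(1-2x)^3(1-x)^7$. I would clear $SC(x)$ by multiplying its numerator by $(1-3x)(1-x)$, $P_{2D\times 2D}(x)$ by $(1-3x)(1-2x)$, and $Diag(x)$ by $(1-2x)(1-x)$; the three resulting numerators have degrees $11$, $12$ and $13$, so their sum is a degree-$13$ polynomial, which I expect to collapse to the claimed $x^3(72x^{10}+36x^9+\cdots-13x+1)$. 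This coefficient matching is entirely routine.

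For the closed-form count I would add the three exact formulas (\ref{eq27}), (\ref{eq28}) and (\ref{eq29}) term by term, sorting the contributions by their pole data, equivalently by the partial-fraction structure of the combined fraction: the simple factor $(1-3x)$ yields the single $3^n$ term (contributed by $Diag$ alone), the factor $(1-2x)^3$ yields $2^n$ times a quadratic in $n$, and $(1-x)^7$ yields a degree-$6$ polynomial in $n$. One point requiring attention is the index convention: (\ref{eq27})--(\ref{eq29}) express the counts of volume $n-2$ as functions of $n$, so I would substitute $n\mapsto n+2$ (equivalently read them as functions of the volume) before collecting; for instance the $2^n$-parts combine as $2^{n}\bigl(4(n^2-27n+194)+12(n-15)-(45n-411)\bigr)$, which after the shift becomes the stated $2^{n+2}(4n^2-125n+741)$, and the $3^n$-part $\tfrac{121}{48}3^n$ becomes $\tfrac{121\cdot 3^{n+1}}{16}$.

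I expect the only genuinely delicate point, as opposed to bookkeeping, to be the justification that the univariate $Diag(x)$ of (\ref{eq29}) is the correct volume-graded count on \emph{every} prism, including the degenerate ones with a side of length $1$. The inclusion-exclusion that produced $Diag(x,y,z)$ in (\ref{eq15}) was explicitly declared invalid on degenerate prisms, so the modification described just before (\ref{eq29})---removing the degenerate $2D$ contributions from $1Diag$ before specializing---must be verified carefully. By contrast the skew-cross and $2D\times 2D$ families are supported on prisms with all sides at least $2$ and contribute no degenerate terms, so once the diagonal count is correct on all prisms the summation over fixed volume and the final algebra are routine.
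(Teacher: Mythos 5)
Your proposal is correct and follows essentially the same route as the paper: the paper also invokes the partition of Theorem~\ref{th3}, specializes $x=y=z$ in the three family generating functions, and obtains the proposition by adding equations (\ref{eq27}), (\ref{eq28}) and (\ref{eq29}), which is precisely your common-denominator and index-shift computation (and your arithmetic, e.g. $2^n\left(4(n^2-27n+194)+12(n-15)-(45n-411)\right)$ shifting to $2^{n+2}(4n^2-125n+741)$, checks out). The delicate point you flag about degenerate prisms is exactly the adjustment the paper makes to $Diag(x,y,z)$ in the paragraph preceding (\ref{eq29}).
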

\begin{table}[htdp]
\begin{center}
\begin{tabular}{|c|c|c|c|c|c|c|c|c|c|c|c|c|c|c|}
\hline
$n$&1&2&3&4&5&6&7&8&9&10\\
\hline
$p_{3D,min}(n)$&1&3& 15& 83& 450& 2295&10834&47175&190407&719243\\
\hline
\end{tabular}
\end{center}
\caption{Numbers $p_{3D,min}(n)$ of minimal polyominoes of volume $n$} 
\label{tab2}
\end{table}
\paragraph{\bf An exact formula for $P_{3D,min}(2,b,k)$} It is possible and relatively easy  to derive one variable formulas for the numbers $P_{3D,min}(2,b,k)$ for different values of $b$ and $k$ with Maple and {\it gfun} but in the next proposition we develop combinatorially a two variables expression for $P_{3D,min}(2,b,k)$.
\begin{proposition}\label{prop6}
The numbers $P_{3D,min}(2,b,k)$ satisfy the expression
\begin{align}\label{eq30}
P_{3D,min}(2,b,k)=&
\left[16\binom{b+k-2}{b-1} - 4(b+k)\right](2b+2k-3)+4(b-2)(k-2)\\
\nonumber 
&+\left[16(b+k-2)-12bk\right](b+k-1)
\end{align}
\end{proposition}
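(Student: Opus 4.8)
The plan is to collapse the problem to a two–dimensional count by projecting along the thin direction. Let $P$ be a minimal polyomino in the $2\times b\times k$ prism and let $Q=\Pi(P)$ be its orthogonal projection onto the $b\times k$ face perpendicular to the $x$-axis. Since the projection of a minimal polyomino is a minimal $2D$ polyomino, $Q$ has exactly $b+k-1$ cells, whereas $P$ has the minimal volume $b+k$; hence exactly one cell of $Q$ is \emph{doubled}, i.e. occupied in both $x$-layers. I would first argue that this doubled cell $p$ (the \emph{pivot}) is the unique bridge between the two layers, so that $P$ is connected, inscribed and of minimal volume if and only if each connected component of $Q\setminus\{p\}$ is lifted entirely into one of the two layers. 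Indeed, within a single component there is no further doubled cell, and a layer-$1$ cell is never face-adjacent to an adjacent layer-$2$ cell, so each component must be monochromatic; the pivot alone already touches both $x$-faces, and any connected inscribed polyomino of volume $b+k$ is automatically minimal, so every such lift is admissible and distinct lifts give distinct polyominoes. This yields the master identity
\[ P_{3D,min}(2,b,k)=\sum_{Q}\ \sum_{p\in Q}2^{c(Q,p)}, \]
where $Q$ runs over the minimal $2D$ polyominoes inscribed in $b\times k$ and $c(Q,p)$ is the number of connected components of $Q\setminus\{p\}$.

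Next I would exploit the fact that minimal $2D$ polyominoes contain no $2\times2$ block, so their cell–adjacency graphs are trees. Removing a cell of degree $d$ from a tree leaves exactly $d$ components, whence $c(Q,p)=\deg_Q(p)\in\{1,2,3,4\}$ and the inner sum becomes $\sum_{p}2^{\deg_Q(p)}$. Writing $T_d$ for the number of degree-$d$ cells summed over all minimal $2D$ polyominoes in $b\times k$, the two tree identities $\sum_d T_d=(b+k-1)\,p_{2D,min}(b,k)$ and $\sum_d d\,T_d=2(b+k-2)\,p_{2D,min}(b,k)$ let me eliminate $T_1$ and $T_2$, giving
\[ P_{3D,min}(2,b,k)=4(b+k-2)\,p_{2D,min}(b,k)+2T_3+8T_4, \]
with $p_{2D,min}(b,k)=8\binom{b+k-2}{b-1}+2(b+k)-3bk-8$ already known.

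It then remains to evaluate $T_3$ and $T_4$, which is where the real work lies. Degree-$4$ cells occur only at the centre of a $2D$ cross whose horizontal and vertical arms both stop strictly short of the four sides, and a short count gives $T_4=(b-2)(k-2)$. The degree-$3$ cells are precisely the hook–stair junctions of the \emph{hook-stair-hook} structure; to total them over all minimal polyominoes I would classify the polyominoes by the number and location of their junctions, weighting each configuration by the stair enumeration $\binom{b+k-2}{b-1}$ and the corner-polyomino count $P_c(b,k)=2\binom{b+k-2}{b-1}-1$. I expect this to produce a closed form of the shape $T_3=8\binom{b+k-2}{b-1}+10(b+k)-8bk-24$. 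The bookkeeping of these junctions, together with the degenerate cases (crosses, single hooks, and prisms with a side of length one), is the main obstacle.

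Finally, substituting $p_{2D,min}(b,k)$, $T_3$ and $T_4$ into the displayed identity and expanding, the $\binom{b+k-2}{b-1}$ contributions and the polynomial contributions regroup into the three bracketed blocks of the statement; this last step is routine algebra. As a sanity check I would confirm the formula against direct enumeration for small prisms, e.g. $P_{3D,min}(2,3,3)=432$, which forces $T_3=12$ and $T_4=1$ and so validates both the master identity and the junction count.
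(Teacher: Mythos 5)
Your framework is sound and, up to the point where you stop, runs parallel to the paper: your master identity $P_{3D,min}(2,b,k)=\sum_Q\sum_{p\in Q}2^{c(Q,p)}$, combined with $c(Q,p)=\deg_Q(p)$, is exactly the paper's starting point (equation (\ref{eq31})), and your justification via the unique pivot and monochromatic components is in fact more careful than the paper's one-line assertion of that identity. The handshake reduction $\sum_{p}2^{\deg(p)}=4(b+k-2)\,p_{2D,min}(b,k)+2T_3+8T_4$ is a genuine economy that the paper does not use (it sums $2^{\deg}$ class by class instead), your $T_4=(b-2)(k-2)$ is correct, and your ``expected'' $T_3=8\binom{b+k-2}{b-1}+10(b+k)-8bk-24$ is indeed the right quantity and does reassemble into (\ref{eq30}).

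However, there is a genuine gap precisely at the step you yourself flag as ``the main obstacle'': $T_3$ is conjectured, not derived, and deriving it is the actual substance of the paper's proof. What is needed is the partition of $P_{2D,min}(b,k)$ by number of leaves, with the class sizes imported from \cite{GCN}: the $2\binom{b+k-2}{b-1}$ stairs contribute no degree-$3$ cell; the $4\binom{b+k-2}{b-1}-2(b+k)$ three-leaved polyominoes (corner-polyominoes minus hooks and stairs) contribute one each; the $(b-2)(k-2)$ crosses contribute none; and the $2\binom{b+k-2}{b-1}+4(b+k-2)-3bk-(b-2)(k-2)$ non-degenerate hook-stair-hook polyominoes contribute two each. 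Summing these gives your claimed $T_3$, but without this classification (and the verification that the four classes exhaust $p_{2D,min}(b,k)$) the enumeration is incomplete; checking the single value $p_{3D,min}(2,3,3)=432$ cannot substitute for it. A second, smaller flaw: the implication ``no $2\times 2$ block, hence the adjacency graph is a tree'' is false in general (a ring of cells around a hole has no $2\times 2$ block and is not a tree); tree-ness of minimal inscribed $2D$ polyominoes should instead be read off their hook-stair-hook structure from \cite{GCN}.
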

\begin{proof}
We use the fact that a $3D$ minimal polyomino inscribed in a $2\times b\times k$ prism is obtained by adding one cell to a $2D$ minimal polyomino inscribed in a $b\times k$ rectangle so  that these $3D$ polyominos are obtained from $2D$ polyominos rooted on one cell when we set the weight of the rooted cell $x$ to be  $2^{deg(x)}$. We have
\begin{align}\label{eq31}
p_{3Dmin}(2,b,k)=\sum_{p\in P_{2Dmin}(2,b,k)}\sum_{x\in p}2^{deg(x)}
\end{align}
Most cells in a minimal $2D$ polyomino have degree $2$ except the leaves of degree $1$ and one or two cells of degree $3$ or $4$. In order to transform equation \ref{eq31} into equation \ref{eq30}, we have to partition  the set $P_{2D,min}(b,k)$ with respect to the number of leaves and the number of cells of degree $3$ and $4$ . 

\paragraph{  Two leaves}  $2D$ Polyominoes with two leaves are the $2\binom{b+k-2}{b-1}$ stairs polyominoes inscribed in a $b\times k$ rectangle. In such a polyomino, there are $b+k-3$ cells of degree two and two cells of degree one.  The number of $3D$ polyominoes obtained from these polyominoes is
\begin{align}\label{eq32}
2\binom{b+k-2}{b-1}\left[ (b+k-3)\times 2^2+2\times 2^1\right].
\end{align}\\
\paragraph{Three leaves} Polyominos with three leaves are the $2D$ corner-polyominos minus the hooks and the stairs and there are $4\binom{b+k-2}{b-1}-2(b+k)$ of them in a $b\times k$ rectangle. Each of these polyomino possesses $b+k-5$ cells of degree two, three cells of degree one and one cell of degree three. The number of $3D$ polyominoes obtained from these polyominoes is thus
\begin{align}\label{eq33}
\left[4\binom{b+k-2}{b-1}-2(b+k)\right]\left[ (b+k-5)\times 2^2+3\times 2^1
+2^3\right].
\end{align}\\
\paragraph{ Four leaves} Polyominos with four leaves are either crosses or non degenerate {\it hook-stair-hook} structures. There are $(b-2)(k-2)$ crosses with four leaves and $2\binom{b+k-2}{b-1}+4(b+k-2)-3bk-(b-2)(k-2)$ 
non degenerate {\it hook-stair-hooks} in a $b\times k$ rectangle. In a cross there are $b+k-6$ cells of degree two, four cells of degree one and one cell of degree four. In a non degenerate {\it hook-stair-hook}, there are  $b+k-7$ cells of degree two, four cells of degre one and two cells of degree three. This gives the following number of $3D$ polyominoes :
\begin{align}\label{eq34}
&(b-2)(k-2)\left[ (b+k-6)\times 2^2+4\times 2^1
+2^4\right]+\\
\nonumber
&\left[2\binom{b+k-2}{b-1}+4(b+k-2)-3bk-(b-2)(k-2)\right]
\left[ (b+k-7)\times 2^2+4\times 2^1+2\times 2^3\right].
\end{align}
The sum of expressions (\ref{eq32}),(\ref{eq33}) and (\ref{eq34}) gives 
equation (\ref{eq30}).
\end{proof}
We can easily extend the argument in the above proof  to obtain exact expressions for $P_{3D,min}(3,b,k)$ from two-rooted $2D$ polyominoes and so on for $P_{3D,min}(4,b,k)$, etc. For $P_{3D,min}(3,b,k)$ with $b\geq2, k\geq 2$, we obtain 
\begin{align*}
P_{3D,min}(3,b,k)=&8(b^3+k^3)-12(b^3k+bk^3)-24b^2k^2-46(b^2+k^2)+41(b^2k+bk^2) -93kb +58(b+k)-8\\
& +4\binom{b+k-2}{b-1}(4b+4k-1)(2b+2k-3).
\end{align*}
\paragraph{\bf Remarks}
\begin{enumerate}
\item  In parallel with this work, one of the authors (H. Cloutier), wrote two programs  to count minimal inscribed polyominoes. One program uses formulas obtained from the projection $\Pi(P)$ of the polyomino on the ceiling of the prism. The other program runs through all $3D$ polyominoes and keeps only the needed ones. We used the datas obtained from these programs to validate the results of this paper. 
\item The argument in the proof of proposition \ref{prop6} can be used to obtain exact formulas for $3D$  polyominoes inscribed in a prism with volume $min+1$ when we use the formula giving $2D$ polyominoes of area $min+1$ in (\cite{GCN}). 
\item In the introduction of this paper, we asked a question about the relation between $2D$ and $3D$ combinatorics. In this work, we had surprises moving from $2D$ to $3D$ but the answer to the question is not clear to us yet and we postpone our judgement. 
\item The diagonal subseries $P_{3D,min}(t)=\sum_np_{3D,min}(n,n,n)t^n$ obtained from $P_{3D,min}(x,y,z)$ by setting all equals the exponents of $x,y,z$  satisfies a functional equation of degree six in $P_{3D,min}(t)$ with coefficients that are polynoms in $t$. But no exact expression for $p_{3D,min}(n,n,n)$ could be found.
\end{enumerate}
\newpage
%=============================================================
% Bibliography
%=============================================================


\begin{thebibliography}{99}
  
\bibitem{BG} M. Bousquet-Mélou, A.J. Guttman \emph{Enumeration of Three-dimensional Convex Polygons}, Annals of Combinatorics, no 7, 27--53, 1997. 
\bibitem{Bo} C.J. Bowkamp, \emph{Packing a rectangular box with the twelve solid pentominoes}, J. of Combinatorial Theory, no 7, 278--280, 1969. 
\bibitem{Go1}K. Gong, \emph{http://kevingong.com/Polyominoes/Enumeration.html}, consulted july  11 2010. 
\bibitem{GCN} A. Goupil, H. Cloutier, F Nouboud, \emph{Enumeration of inscribed  polyominoes},
\newblock  Discrete  Applied Mathematics, accepted, july 2010.
\bibitem{Ma}
P.A. MacMahon, {\it Combinatory Analysis}, Vols. I and II, Cambridge University Press, Cambridge, 1915-1916, reprinted 1960.
\bibitem{Go2} S. Golomb, \emph{Checker Boards and Polyominoes},
\newblock  Amer. Math. Monthly 61, 675--682, 1954.


 \end{thebibliography}
\end{document}